\newtheoremstyle{mystyle}
  {}
  {}
  {}
  {}
  {}
  {.}
  { }
  {{\underline{\thmname{#1}\thmnumber{ #2}~\thmnote{(#3)}}}}
\newtheoremstyle{mystyle2}
  {}
  {}
  {\itshape}
  {}
  {}
  {.}
  { }
  {{\underline{\thmname{#1}\thmnumber{ #2}~\thmnote{(#3)}}}}
\theoremstyle{definition}
\theoremstyle{mystyle}
\newtheorem{defi}{Definition}[section]
\newtheorem{rmk}[defi]{Remark}
\newtheorem{lemma}[defi]{Lemma}
\theoremstyle{plain}
\theoremstyle{mystyle2}
\newtheorem{thm}[defi]{Theorem}
\newtheorem{prop}[defi]{Proposition}
\newtheorem{cor}[defi]{Corollary}
\newcommand{\R}{\mathbb{R}}
\newcommand{\N}{\mathbb{N}}
\newcommand{\Z}{\mathbb{Z}}
\newcommand{\p}{\mathbb{P}}
\newcommand{\E}{\mathbb{E}}
\newcommand{\var}{\mathrm{Var}}
\newcommand{\dd}{\mathrm{d}}
\newcommand{\1}{\mathbbm{1}}
\newcommand{\D}{\mathscr{D}}
\newcommand{\LL}{\mathcal{L}}
\newcommand{\el}{\mathscr{L}}
\newcommand{\Cov}{\mathrm{Cov}}
\newcommand{\grad}{\nabla}
\title{Equilibrium fluctuations for the Symmetric Exclusion Process on a compact Riemannian manifold}
\author{Bart van Ginkel\footnote{G.J.vanGinkel@tudelft.nl} \and Frank Redig\footnote{F.H.J.Redig@tudelft.nl}}
\date{%
    TU Delft\\
    \today
}
\begin{document}
\maketitle
\begin{abstract}
We consider the Symmetric Exclusion Process on a compact Riemannian manifold, as introduced in~\cite{vanGinkel/Redig:2018}. There it was shown that the hydrodynamic limit satisfies the heat equation. In this paper we study the equilibrium fluctuations around this hydrodynamic limit. We define the fluctuation fields as functionals acting on smooth functions on the manifold and we show that they converge in distribution in the path space to a generalized Ornstein-Uhlenbeck process. This is done by proving tightness and by showing that the limiting fluctuations satisfy the corresponding martingale problem.
\end{abstract}
\section{Introduction}

Over the past decades a lot of research has been done within the field of (mathematical) interacting particle systems to understand the emergence of macroscopic phenomena (like shock waves or spontaneous magnetization) and/or the behaviour of macroscopic observables (like pressure or total magnetization) from the dynamics of microscopic constituents (like particles or spins). The study of hydrodynamic limits concerns the derivation of the PDEs that govern macroscopic quantities from the (rescaled) dynamics of microscopic particle configurations. Hydrodynamic limits have been obtained for a large variety of interacting particle systems and by now there are multiple well-established methods to do this (see for instance~\cite{demasi2006mathematical}, \cite{kipnis1999scaling}). One can think of a hydrodynamic limit as a generalized law of large numbers. The natural question that one would like to answer after obtaining such result is how the limiting density field fluctuates around its (deterministic) hydrodynamic limit. In other words: one would like to find a corresponding (infinite dimensional) central limit theorem. For these so-called fluctuations a lot of models have been studied and by now standard methods have been established (see for instance~\cite[Chapter 11]{kipnis1999scaling}).

Most of the results that were described above are set in (a subset of) some Euclidean space. However, some phenomena are naturally modelled in a space that is not Euclidean. E.g., one could think of the motion of proteins along cell membranes. Apart from motivation from potential physics applications, it is also an important mathematical challenge to understand the influence on hydrodynamic limits and their fluctuations of geometric properties of the underlying space such as curvature. Therefore, it is worthwhile to extend the study of interacting particle systems to non-Euclidean spaces. 
Here one could think of spaces with a fractal structure, such as the Sierpinski gasket. In this area results about the Symmetric Exclusion Process have been obtained in for instance~\cite{Jara2009} and~\cite{chen2019phase}. A big advantage of the fractals such as the Sierpinski gasket is that there is a natural discretization available in the definition of the fractal.

In this paper, we are interested in this kind of results on Riemannian manifolds.
In~\cite{vanGinkel/Redig:2018} it was shown that one can set up the theory of hydrodynamic limits on a compact Riemannian manifold by defining suitable grid approximations of the manifold to define the microscopic particle systems. More precisely, we proved that the hydrodynamic limit of the Symmetric Exclusion Process on these grids is the heat equation on the manifold. Moreover, we showed that such grid approximations exist and can be obtained by sampling points uniformly from the manifold, and connecting them with edge weights depending on the Riemannian distance. 

In this paper, we continue the study of the exclusion process on a compact Riemannian manifolds by looking at the trajectory of the density fluctuation field. We consider the Symmetric Exclusion Process started from equilibrium (so from the product of Bernoulli measures with fixed intensity $\rho$) and we show that the corresponding fluctuation fields converge in law in the space of distribution-valued trajectories to a generalized Ornstein-Uhlenbeck process.
To do this, we follow the method that is described in~\cite[Chapter 11]{kipnis1999scaling}, i.e. we show tightness and we prove that any limiting distribution satisfies the same martingale problem.\\
Working on a manifold instead of $\R^d$ or the torus poses several new challenges. The first challenge is how to discretize the manifold in a suitable way. As we mentioned earlier, this was dealt with in~\cite{vanGinkel/Redig:2018}. The second challenge is to make sense of the fluctuation fields in the right space. The computations for tightness in a negatively indexed Sobolev space as performed in~\cite{kipnis1999scaling} become intrinsically more involved on a manifold due to the absence of notions like translation and translation invariance, which implies that one cannot rely on standard Fourier analysis for discrete and continuous Laplacians. Therefore we resort to define the fluctuations fields as elements of the dual of the smooth functions $C^\infty(M)$ on the manifold. The advantage of this approach is that $C^\infty(M)$ is a nuclear space, which ensures that we only need to prove tightness of the distribution-valued trajectories applied to test functions. The drawback, however, is that the dual of $C^\infty(M)$ does not have a norm and that the space of cadlag trajectories is not even metrizable. Therefore we must be careful when treating the convergence of the martingales in section~\ref{sec:uniqueness}.\\
Finally, note that the result of this paper is also a constructive proof for the existence of generalized Ornstein-Uhlenbeck processes on compact Riemannian manifolds. This type of processes have been studied (through their corresponding SPDEs) in for instance~\cite{christensen1985linear}. In our work, we show that the fluctuations fields converge to a well-defined limiting random field and that this field satisfies the martingale problem that is associated to a generalized Ornstein-Uhlenbeck process.

\subsection*{Overview of the paper}
In section~\ref{sec:prelim} we define the grids that approximate the manifold, the Symmetric Exclusion Process on the grids and the corresponding fluctuations fields as random elements of $D([0,T],(C^\infty)')$. We also state the main theorem and give a brief overview of the proof. Then in section~\ref{sec:mgales} we study the Dynkin martingale associated to the fluctuation fields. In section~\ref{sec:tightness} we prove tightness of the distributions on $D([0,T],(C^\infty)')$ of the fluctuation fields. Finally, in section~\ref{sec:uniqueness}, we show that all possible limiting measures of subsequences are the same, by showing that they satisfy the same martingale problem with the same initial conditions.
\section{Preliminaries}\label{sec:prelim}
In all of this paper we fix a compact Riemannian manifold $M$. In this section we will introduce approximating grids on $M$ and the Symmetric Exclusion Process on these grids. Further we state the theorem and give an outline of its proof.

\subsection*{Definitions}
In order to define interacting particle systems on a manifold, we need a suitable discretization of the manifold. In $\R^d$ such disretization is easily obtained by taking $\frac{1}{N} \Z^d$ (or something similar). A manifold, however, does not have these nice scaling properties, so another path must be taken (this is explained further in~\cite{vanGinkel/Redig:2018}). 

Let $(G^N,c^N)_{N=1}^\infty$ be a sequence of grids with $G^N=\{p^N_1,..,p^N_N\}\subset M$ (we usually write simply $p_i$ instead of $p_i^N$) and edge weights $c^N=\{c^N_{ij}\}_{i,j\leq N}$ where $c^N_{ij}$ is the weight of the edge between $p^N_i$ and $p^N_j$ and we assume that $c^N_{ij}=c^N_{ji}\geq 0$ for all $i,j\leq N$. We denote by $\el^N$ the corresponding graph Laplacians
\begin{equation*}
    \el^Nf(p_i) = \sum_{j=1}^N c_{ij}^N (f(p_j)-f(p_i)).
\end{equation*}
Note that this operator (acting on functions $G^N\rightarrow \R$) generates a random walk on $G_N$ with jumping rates $c_N$.
We assume that the graph Laplacians converge to the Laplace-Beltrami operator $\Delta_M$ in a uniform way, i.e. for all $f\in C^\infty$
\begin{equation}\label{approxgrid}
    \lim_{N\rightarrow\infty} \sup_{1\leq i\leq N} \left| \sum_{j=1}^N c_{ij}^N (f(p_j)-f(p_i))-\Delta_Mf(p_i)\right|=0.
\end{equation}
It will be convenient later to have the following notation for fixed smooth $f$
\begin{equation*}
    E_f(N)=\sup_{i\leq N} \left| \el^Nf(p_i)-\Delta_Mf(p_i)\right|.
\end{equation*}
Note that by assumption~\eqref{approxgrid} for each smooth $f$, $E_f(N)$ goes to $0$ as $N$ goes to infinity.

Finally we assume that the empirical measures corresponding to the $G^N$ converge weakly to the normalized volume measure $\overline V$ on $M$, i.e. for all continuous $f$
\begin{equation*}
    \frac{1}{N}\sum_{i=1}^N f(p_i) = \int f \dd \left(\frac{1}{N}\sum_{i=1}^N \delta_{p_i}\right) \rightarrow \int f \dd \overline V \hspace{1cm} (N\rightarrow \infty).
\end{equation*}
\begin{rmk}
Note that usually for results about hydrodynamic limits or fluctuations there is an explicit time scaling visible in the equations, i.e. in the diffusive case (like for the Symmetric Exclusion Process) one would typically consider $N^2 \el^N$. However, in our case this rescaling is hidden in the conductances $c^N_{ij}$ because of the assumption in~\eqref{approxgrid}. The reason for this approach is that it is less straightforward to define the space scale in a more general grid than a lattice. To see how the (diffusive) space and time scales do show up in a particular construction of such grid, see~\cite[Remark 3.5]{vanGinkel/Redig:2018}.
\end{rmk}
\begin{rmk}
We formulate the results of this paper in terms of these general uniformly approximating grids. Note, however, that these grids can always be obtained. It was shown in~\cite{vanGinkel/Redig:2018} that sampling a sequence of iid random uniformly random elements from the manifold and setting $G^N$ to be the first $N$ of these elements yields a suitable sequence of grids with probability $1$.
\end{rmk}
We can now define the Symmetric Exclusion Process (SEP) on $G^N$. The idea of this process is that it describes particles that perform independent random walks according to the jumping rates $c^N$ with the restriction that jumps to occupied sites are suppressed. Note that in this way every site contains at most one particle. Therefore the SEP $\eta^{N}=(\eta^{N}_t)_{t\geq0}$ takes values in $\{0,1\}^{G^N}$, where $1$ denotes the presence of a particle and $0$ the absence. An equivalent way to describe the process is by saying that the edges have clocks that ring according to the rates $c^N$ and that if particles are present at either ends of the edge, they jump to the other end. Therefore the dynamics are defined through the generator
\begin{equation}
    L^{N}h(\eta)=\frac{1}{2}\sum_{i,j=1}^Nc_{ij}^N (h(\eta^{ij})-h(\eta)), \quad h: \{0,1\}^{G^N}\rightarrow\R,\label{SEPgen}
\end{equation}
where $\eta^{ij}:=\eta^{p_ip_j}$ denotes the configuration obtained from $\eta$ by exchanging the values at $p_i$ and $p_j$. 

Since we want to consider equilibrium fluctuations, we want to initialize the process in a stationary measure. Therefore fix $\rho\in(0,1)$ and set as the initial configuration $\nu^N_\rho$: the product of $N$ Bernoulli distributions with parameter $\rho$. It is well kwown that the SEP is reversible with respect to this measure, so in particular that this measure is invariant for the SEP.

Now we define the fluctuation field $Y^N\in D([0,T],(C^\infty)')$ through the following action on smooth functions $f\in C^\infty$
\begin{equation}
    Y^N_t(f) = \frac{1}{\sqrt{N}} \sum_{i=1}^N f(p_i)(\eta^N_{t}(p_i)-\rho).\label{def:field}
\end{equation}
The law of the underlying process $\eta$ induces a law $\LL_N$ of the density fields on $D([0,T],(C^\infty)')$.

\begin{rmk}
To see that~\eqref{def:field} is the right object with the right scaling, note the following. First of all at fixed times $\eta^N_t$ is distributed like a product of Bernoulli measures, so it is a very rough object and it makes sense to regard it as acting on functions (instead of considering its pointwise values). Second, the expectation of $\eta_t^N(p_i)$ equals $\rho$ for every grid point, so the right quantity is subtracted. This makes sure that for any $f$, $\E Y_t^N(f)=0$. Finally, $\var Y_t^N(f)$ equals
\begin{equation*}
    \var\left(\frac{1}{\sqrt{N}} \sum_{i=1}^N f(p_i)(\eta^N_{t}(p_i)-\rho)\right) = \frac{1}{N} \sum_{i=1}^Nf^2(p_i)\rho(1-\rho) \rightarrow \rho(1-\rho) \int f^2\dd \overline V,
\end{equation*}
where we use that $f$ is continuous and that the empirical measure of the grid points converges to the uniform measure on the manifold. 
This motivates that $1/\sqrt{N}$ provides the right scaling to get a meaningful, non-degenerate limit.
\end{rmk}

It is natural to expect that the fluctuation field converges to a generalized stationary Ornstein-Uhlenbeck process. This process is the solution of the following (formal) SPDE
\begin{equation}\label{eq:OU-SPDE}
    \dd Y_t = \frac{1}{2} \Delta Y_t \dd t + \sqrt{\rho(1-\rho)} \nabla \dd W_t,
\end{equation}
where $Y$ takes values in $D([0,T],(C^\infty)')$ and $W_t$ is space-time white noise. A process $Y$ is a mild solution of~\eqref{eq:OU-SPDE} if for any $f\in C^\infty$
\begin{equation*}
    Y_t(f)= Y_0(f) + \sqrt{\rho(1-\rho)}\int_0^t \nabla S_{t-s}f \dd W_s,
\end{equation*}
where $(S_t,t\geq 0)$ is the semigroup corresponding to Brownian motion. The solution is a Gaussian process that is stationary with respect to white noise $W_0$ with covariance
\begin{equation*}
    \Cov(W_0(f),W_0(g))=\rho(1-\rho)\langle f,g\rangle
\end{equation*} 
and with stationary covariance
\begin{equation}\label{eq:covOU}
    \Cov(Y_t(f),Y_s(g))=\rho(1-\rho)\langle f,S_{|t-s|} g\rangle.
\end{equation}
More precisely, this Ornstein-Uhlenbeck process is defined via the following martingale problem. For each test function $f$ the following are martingales:
\begin{eqnarray}
    M_t^f&:=&Y_t(f)-Y_0(f) - \int_0^t Y_s(\Delta_Mf)\dd s \label{eq:OU-mgale-def}\\
    N_t^f&:=& (M_t^f)^2 - 2t\rho(1-\rho)\int (\grad f)^2 \dd \overline V.\nonumber
\end{eqnarray}

\subsection*{Main theorem and overview of the proof}
The main theorem of this paper is the following.
\begin{thm}\label{main theorem}
There exists a random element $Y$ of $C([0,T],(C^\infty)')\subset D([0,T],(C^\infty)')$ with corresponding law $\LL$ on $D([0,T],(C^\infty)')$ such that $\LL_N\rightarrow \LL$ as $N$ goes to infinity. Moreover, this $Y$ is a generalized Ornstein-Uhlenbeck process solving the martingale problem~\eqref{eq:OU-mgale-def}.
\end{thm}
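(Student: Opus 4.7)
The plan is to follow the standard three-step program for equilibrium fluctuations, adapted to the nuclear-space setting of $(C^\infty(M))'$. First I would establish tightness of the family $\{\LL_N\}$ on $D([0,T],(C^\infty)')$. Since $C^\infty(M)$ is a nuclear Fréchet space, Mitoma's theorem applies and reduces the question to tightness of the real-valued processes $\bigl(Y_t^N(f)\bigr)_{t\in[0,T]}$ on $D([0,T],\R)$ for each fixed $f\in C^\infty(M)$.

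For each such $f$, I would apply Dynkin's formula to decompose
\begin{equation*}
    Y_t^N(f) = Y_0^N(f) + \int_0^t Y_s^N(\el^N f)\,\dd s + M_t^{N,f},
\end{equation*}
where $M_t^{N,f}$ is a martingale whose predictable quadratic variation is an explicit sum involving the rates $c_{ij}^N$ and the discrete gradients $(f(p_j)-f(p_i))^2$. Stationarity under $\nu^N_\rho$ gives uniform second-moment bounds on $Y_s^N(\el^N f)$ via $\var Y_s^N(g) = \tfrac{1}{N}\sum_i g^2(p_i)\rho(1-\rho)$, combined with $\sup_i|\el^N f(p_i)|\le \|\Delta_M f\|_\infty + E_f(N)$. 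Tightness then follows from Aldous's criterion applied separately to the integral and martingale parts, using Doob's inequality for the latter.

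The second step is to identify any subsequential limit $Y$ as a solution of the martingale problem (\ref{eq:OU-mgale-def}). The idea is to pass to the limit in $M_t^{N,f}$ by replacing $\el^N f$ with $\Delta_M f$ inside the integrand (incurring an error of order $E_f(N)\to 0$ uniformly), and to identify the quadratic variation by computing the carré du champ for $L^N$: a direct calculation gives
\begin{equation*}
    \Gamma^N((Y^N(f))^2) = \frac{1}{2N}\sum_{i,j} c_{ij}^N \bigl(f(p_j)-f(p_i)\bigr)^2 \bigl(\eta(p_i)-\eta(p_j)\bigr)^2,
\end{equation*}
and taking expectations under the Bernoulli product measure yields $\tfrac{1}{N}\sum_i f(p_i)(-\el^N f)(p_i)\cdot 2\rho(1-\rho)$, which converges to $-2\rho(1-\rho)\int f\Delta_M f\,\dd\overline V = 2\rho(1-\rho)\int (\grad f)^2\,\dd\overline V$ by integration by parts on $M$ and weak convergence of the empirical measure. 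Because $(C^\infty)'$ is not metrizable, the passage of the martingale property through the limit needs to be handled carefully via evaluations at finitely many test functions and bounded continuous functionals of cylinder paths.

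Finally I would invoke uniqueness. The initial law is identified by a CLT: $Y_0^N(f)$ is a normalized sum of bounded independent centered variables and converges to a centered Gaussian with variance $\rho(1-\rho)\int f^2\dd\overline V$, so $Y_0$ is white noise with covariance $\rho(1-\rho)\langle f,g\rangle$. The martingale problem (\ref{eq:OU-mgale-def}) with this Gaussian initial condition determines a unique Gaussian process with the stationary covariance (\ref{eq:covOU}), since $\tfrac{1}{2}\Delta_M$ generates a well-posed linear evolution on $C^\infty(M)$. Uniqueness upgrades subsequential convergence to convergence of the full sequence. The main obstacle is that, unlike on the torus, there is no translation invariance to give cheap Fourier estimates; the whole argument must instead be driven by the uniform convergence $E_f(N)\to 0$ and by the spectral theory of $\Delta_M$ on the manifold, and additional care is needed throughout because the path space $D([0,T],(C^\infty)')$ carries no norm.
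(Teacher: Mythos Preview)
Your proposal is correct and follows essentially the same approach as the paper: Mitoma's reduction plus Aldous's criterion for tightness via the Dynkin decomposition, replacement of $\el^N f$ by $\Delta_M f$ using $E_f(N)\to 0$, identification of the carr\'e du champ, and uniqueness via the martingale problem with Gaussian initial data. The one technical point you leave implicit is that identifying the limiting quadratic variation as the deterministic constant $2t\rho(1-\rho)\int(\nabla f)^2\,\dd\overline V$ requires not only $\E\,\Gamma^{N,f}(s)$ to converge but also $\var\,\Gamma^{N,f}(s)\to 0$, which the paper establishes as a separate lemma.
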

In other words, the theorem says that as $N$ approaches infinity the trajectories of fluctuations converge to a generalized Ornstein-Uhlenbeck process.

The proof consists of two parts. In section~\ref{sec:tightness} we will show tightness of $(\LL_N,N\in\N)$. By the first part of the proof of proposition 5.1 from~\cite{mitoma1983tightness}, this implies that every subsequence of $(\LL_N,N\in\N)$ has a further subsequence that converges to some limit. Then in section~\ref{sec:uniqueness} we show that all limiting points are the same. This is done by showing that any limiting measure satisfies the same martingale problem with the same initial condition. This martingale problem also characterizes the limiting process as a generalized Ornstein-Uhlenbeck process like described above and we compute the limiting covariance to confirm this. Together these results imply theorem~\ref{main theorem}. To do all this, we start by analyzing martingales involving the fluctuation fields in section~\ref{sec:mgales}.
\section{Dynkin martingale}\label{sec:mgales}
We know that $\eta^N$ is a Markov process with generator given by~\eqref{SEPgen} (we will usually leave out the superscript $N$). Now if we fix $f\in C^\infty$, we can define the function
\begin{equation*}
    \phi^{N,f}=\{0,1\}^{G^N}\longrightarrow \R,\quad \eta\mapsto \frac{1}{\sqrt N}\sum_{i=1}^Nf(p_i)(\eta(p_i)-\rho).
\end{equation*}
Now we know that both the Dynkin martingale
\begin{equation*}
    M^{N,f}_t:=\phi^{N,f}(\eta_t)-\phi^{N,f}(\eta_0)-\int_0^t L^N\phi^{N,f}(\eta_s)\dd s
\end{equation*}
and 
\begin{equation*}
    N^{N,f}_t:=\left(M^{N,f}_t\right)^2-\int_0^t L^N(\phi^{N,f})^2(\eta_s)-2\phi^{N,f}(\eta_s)L^N\phi^{N,f}(\eta_s)\dd s
\end{equation*}
are martingales with respect to the natural filtration generated by $\eta$ (for this well-known result and approach see for instance~\cite{seppalainen2008translation} or \cite{kipnis1999scaling}). Since these martingales will have an important role in the calculations later, we will calculate the components that are involved and study their limits. 

\subsection*{The martingale $M^{N,f}$}
First of all note that
\begin{equation*}
    \phi^{N,f}(\eta_t) = Y^N_t(f).
\end{equation*}
Second we want to calculate $L^N\phi^{N,f}(\eta)$ for $\eta\in \{0,1\}^{G^N}$. To do this, first we see
\begin{eqnarray*}
    \phi^{N,f}(\eta^{ij})-\phi^{N,f}(\eta) &=& \frac{1}{\sqrt N}\sum_{k=1}^N f(p_k)(\eta^{ij}(p_k)-\rho) - \frac{1}{\sqrt N}\sum_{k=1}^N f(p_k)(\eta(p_k)-\rho) \\
    &=& \frac{1}{\sqrt N}\sum_{k=1}^N f(p_k)(\eta^{ij}(p_k)-\eta(p_k)) = \frac{1}{\sqrt N} (f(p_i)(\eta(p_j)-\eta(p_i))+f(p_j)(\eta(p_i)-\eta(p_j)))\\
    &=& \frac{1}{\sqrt N} (\eta(p_i)(f(p_j)-f(p_i))+\eta(p_j)(f(p_i)-f(p_j))).
\end{eqnarray*}
Now we obtain that
\begin{eqnarray*}
    L^N\phi^{N,f}(\eta)=\frac{1}{2} \sum_{i,j=1}^N c_{ij}^N \frac{1}{\sqrt N} (\eta(p_i)(f(p_j)-f(p_i))+\eta(p_j)(f(p_i)-f(p_j))).
\end{eqnarray*}
By symmetry of the weights, this equals
\begin{eqnarray}
    \sum_{i,j=1}^N c_{ij}^N \frac{1}{\sqrt N} \eta(p_i)(f(p_j)-f(p_i)) &=& 
    \frac{1}{\sqrt N} \sum_{i=1}^N \eta(p_i) \sum_{j=1}^N c_{ij}^N (f(p_j)-f(p_i))\nonumber \\
    &=& \frac{1}{\sqrt N} \sum_{i=1}^N \eta(p_i) \el^Nf(p_i),\label{eq1}
\end{eqnarray}
where we recall that $\el^N$ is the generator of the random walk according to the weights $c_{ij}^N$ on $G^N$. Now note that
\begin{equation*}
    \frac{1}{\sqrt N} \sum_{i=1}^N \rho \el^Nf(p_i) = \frac{\rho}{\sqrt{N}} \sum_{i,j=1}^N c_{ij}^N(f(p_j)-f(p_i)) = 0
\end{equation*}
to see that~\eqref{eq1} equals
\begin{equation*}
    \frac{1}{\sqrt N} \sum_{i=1}^N  \el^Nf(p_i)(\eta(p_i)-\rho).
\end{equation*}
This implies that 
\begin{equation}\label{Dmgale}
    M_t^{N,f}= Y_t^N(f)-Y_0^N(f)-\int_0^t Y_s^N(\el^N f)\dd s.
\end{equation}
The next lemma shows that as $N$ grows to infinity we can replace $\el^N$ by $\Delta_M$.
\begin{lemma}\label{lemma:replacelaplacian}
For all $f\in C^\infty$,
\begin{equation*}
    \lim_{N\rightarrow\infty} \E \left(\int_0^t Y_s^N(\el^N f)\dd s - \int_0^t Y_s^N(\Delta_M f)\dd s\right)^2 = 0.
\end{equation*}
\end{lemma}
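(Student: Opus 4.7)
The plan is to reduce the squared-integral to a pointwise variance estimate that directly exploits the uniform convergence bound $E_f(N)\to 0$ together with the stationarity of the process under $\nu^N_\rho$.

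First, using linearity of $Y_s^N$ in the test function, I would rewrite the quantity of interest as
\begin{equation*}
    \int_0^t Y_s^N(\el^N f)\,\dd s - \int_0^t Y_s^N(\Delta_M f)\,\dd s = \int_0^t Y_s^N(g_N)\,\dd s,
\end{equation*}
where $g_N := \el^N f - \Delta_M f$ is a function on $G^N$ satisfying $\sup_{i\le N}|g_N(p_i)| = E_f(N)$. By the Cauchy--Schwarz inequality applied to the time integral and then by Fubini,
\begin{equation*}
    \E\left(\int_0^t Y_s^N(g_N)\,\dd s\right)^2 \le t\int_0^t \E\bigl[Y_s^N(g_N)^2\bigr]\,\dd s.
\end{equation*}

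Next I would compute $\E[Y_s^N(g_N)^2]$. Since $\eta^N$ is started from $\nu^N_\rho$ and this product Bernoulli measure is invariant for the SEP, $\eta^N_s$ is distributed as $\nu^N_\rho$ for every $s\ge 0$. Under this product measure the variables $\eta(p_i)-\rho$ are independent, centered, and each has variance $\rho(1-\rho)$, so
\begin{equation*}
    \E\bigl[Y_s^N(g_N)^2\bigr] = \frac{1}{N}\sum_{i=1}^N g_N(p_i)^2\,\rho(1-\rho) \le \rho(1-\rho)\,E_f(N)^2,
\end{equation*}
using the uniform bound $|g_N(p_i)|\le E_f(N)$. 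Plugging this into the previous display gives
\begin{equation*}
    \E\left(\int_0^t Y_s^N(\el^Nf - \Delta_M f)\,\dd s\right)^2 \le t^2\,\rho(1-\rho)\,E_f(N)^2,
\end{equation*}
and by assumption~\eqref{approxgrid} the right-hand side tends to $0$ as $N\to\infty$, which is the claim.

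There is no real obstacle here: the uniform approximation~\eqref{approxgrid} of the Laplace--Beltrami operator by $\el^N$ on smooth functions is precisely what is needed, and stationarity reduces the time-dependent estimate to a single computation of variance under the product Bernoulli measure. The only place where one must be slightly careful is to remember that $\el^N f - \Delta_M f$, regarded as a function on $G^N$, is centered enough in the sense that we do not need additional cancellations — the $L^\infty$ bound $E_f(N)$ already suffices through the averaged sum of squares.
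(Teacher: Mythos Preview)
Your proof is correct and essentially identical to the paper's: both rewrite the difference as $\int_0^t Y_s^N(\el^N f - \Delta_M f)\,\dd s$, apply Cauchy--Schwarz in time, use stationarity under $\nu_\rho^N$ to reduce to a single variance computation under the product Bernoulli measure, and bound that variance by $\rho(1-\rho)E_f(N)^2$ to obtain the final estimate $t^2\rho(1-\rho)E_f(N)^2\to 0$.
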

\begin{proof}
First we see
\begin{eqnarray}
    &&\E \left(\int_0^t Y_s^N(\el^N f)\dd s - \int_0^t Y_s^N(\Delta_M f)\dd s\right)^2 =
\E \left( \int_0^t Y^N_s(\Delta_Mf-\el^Nf)\dd s \right)^2\nonumber\\
    &\leq& t \int_0^t \E Y^N_s(\Delta_Mf-\el^Nf)^2 \dd s.\label{eq:lemma3.1}
\end{eqnarray}
Now, using that $\eta_t$ is a vector of independent Bernoulli random variables, we compute
\begin{eqnarray*}
    \E Y^N_s(\Delta_Mf-\el^Nf)^2 = \frac{1}{N} \sum_{i=1}^N\left(\Delta_Mf(p_i)-\el^Nf(p_i)\right)^2 \rho(1-\rho) \leq \rho(1-\rho)E_f(N)^2,
\end{eqnarray*}
so~\eqref{eq:lemma3.1} is bounded by $t^2 \rho(1-\rho)E_f(N)^2$, which vanishes in the limit.
\end{proof}

\subsection*{The martingale $N^{N,f}$}
Now we analyze the second martingale. First we calculate the integrand, which we will denote by $\Gamma^{N,f}(s)$, and its expectation and variance.
\begin{lemma}
For all $f\in C^\infty$,
\begin{equation*}
    \lim_{N\rightarrow\infty} \E \Gamma^{N,f}(s) = 2\rho(1-\rho)\int (\nabla f)^2 \dd \overline V.
\end{equation*}
\end{lemma}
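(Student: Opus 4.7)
My plan is to compute $\Gamma^{N,f}(s)$ directly via the carré du champ formula, take expectation under the invariant product Bernoulli measure, and then identify the limit via a discrete summation-by-parts combined with the hypothesis~\eqref{approxgrid} and the weak convergence of the empirical measures to $\overline V$.

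First, a standard algebraic manipulation of the generator~\eqref{SEPgen} shows that for any observable $h\colon \{0,1\}^{G^N}\to\R$,
\[
L^N(h^2)(\eta) - 2h(\eta) L^N h(\eta) = \frac{1}{2}\sum_{i,j=1}^N c_{ij}^N (h(\eta^{ij}) - h(\eta))^2.
\]
Applying this with $h=\phi^{N,f}$ and substituting the identity
\[
\phi^{N,f}(\eta^{ij}) - \phi^{N,f}(\eta) = \frac{1}{\sqrt N}(\eta(p_i)-\eta(p_j))(f(p_j)-f(p_i)),
\]
already established in the previous subsection, I obtain
\[
\Gamma^{N,f}(s) = \frac{1}{2N}\sum_{i,j=1}^N c_{ij}^N (\eta_s(p_i)-\eta_s(p_j))^2 (f(p_j)-f(p_i))^2.
\]
Since $\eta_s \sim \nu^N_\rho$ for every $s$ by invariance, the diagonal terms vanish and for $i\neq j$ the coordinates $\eta_s(p_i)$ and $\eta_s(p_j)$ are independent Bernoulli($\rho$), whence $\E(\eta_s(p_i)-\eta_s(p_j))^2 = 2\rho(1-\rho)$. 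Thus
\[
\E\Gamma^{N,f}(s) = \frac{\rho(1-\rho)}{N}\sum_{i,j=1}^N c_{ij}^N (f(p_j)-f(p_i))^2.
\]

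The next step is a discrete integration by parts. Using the symmetry $c_{ij}^N = c_{ji}^N$ and expanding the square,
\[
\sum_{i,j=1}^N c_{ij}^N (f(p_j)-f(p_i))^2 = -2\sum_{i=1}^N f(p_i)\, \el^N f(p_i),
\]
so that
\[
\E\Gamma^{N,f}(s) = -2\rho(1-\rho)\cdot \frac{1}{N}\sum_{i=1}^N f(p_i)\, \el^N f(p_i).
\]

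For the passage to the limit, I split
\[
\frac{1}{N}\sum_{i=1}^N f(p_i)\el^N f(p_i) = \frac{1}{N}\sum_{i=1}^N f(p_i)\Delta_M f(p_i) + \frac{1}{N}\sum_{i=1}^N f(p_i)\bigl(\el^N f(p_i)-\Delta_M f(p_i)\bigr).
\]
The remainder is bounded in absolute value by $\|f\|_\infty E_f(N)$ and thus vanishes by~\eqref{approxgrid}. The main term converges to $\int f\Delta_M f \dd\overline V$ by the assumed weak convergence of the empirical measures, applied to the continuous function $f\cdot \Delta_M f$. Finally, Green's identity on the compact boundaryless manifold $M$ gives $\int f\Delta_M f \dd\overline V = -\int (\grad f)^2 \dd\overline V$, yielding the claimed limit. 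The computation is essentially routine; the only point requiring a moment of care is the summation-by-parts identity and the remark that, because $M$ has no boundary, no boundary terms arise when integrating against $\Delta_M$, so I do not anticipate any real obstacle.
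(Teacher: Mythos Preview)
Your proof is correct and follows essentially the same route as the paper: compute $\Gamma^{N,f}$ via the carr\'e du champ, take expectation under $\nu_\rho^N$, perform the discrete summation-by-parts to bring out $\el^N f$, replace $\el^N f$ by $\Delta_M f$ with error $\|f\|_\infty E_f(N)$, and pass to the limit using weak convergence of the empirical measures and Green's identity. The only cosmetic difference is that the paper performs the summation-by-parts before taking the expectation, whereas you take the expectation first and then sum by parts; the resulting computations are identical.
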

\begin{proof}
First we calculate
\begin{eqnarray}
   \Gamma^{N,f}(s) &=& L^N(\phi^{N,f})^2(\eta_s)-2\phi^{N,f}(\eta_s)L^N\phi^{N,f}(\eta_s) \label{carchaexp}\\
    &=& \frac{1}{2} \sum_{i,j=1}^N c_{ij}^N\left(\phi^{N,f}(\eta_s^{ij})-\phi^{N,f}(\eta_s)\right)^2\nonumber\\
    &=& \frac{1}{2} \sum_{i,j=1}^N c_{ij}^N \left(\frac{1}{\sqrt N} \bigg[\eta_s(p_i)(f(p_j)-f(p_i))+\eta_s(p_j)(f(p_i)-f(p_j))\bigg]\right)^2\nonumber\\
    &=& \frac{1}{2N} \sum_{i,j=1}^N c_{ij}^N (\eta_s(p_j)-\eta_s(p_i))^2(f(p_j)-f(p_i))^2\label{quadvar}\\
    &=& -\frac{1}{N} \sum_{i,j=1}^N c_{ij}^N (f(p_j)-f(p_i))f(p_i) (\eta_s(p_j)-\eta_s(p_i))^2\nonumber.
\end{eqnarray}
Now we take the expectation and, using that $\E (\eta_s(p_j)-\eta_s(p_i))^2=2\rho(1-\rho)$, we obtain
\begin{eqnarray}
    \E \Gamma^{N,f}(s)&=& -\frac{1}{N} \sum_{i,j=1}^N c_{ij}^N (f(p_j)-f(p_i))f(p_i) \E (\eta_s(p_j)-\eta_s(p_i))^2 \nonumber\\
    &=& -\frac{2\rho(1-\rho)}{N}\sum_{i=1}^Nf(p_i) \sum_{j=1}^N c_{ij}^N (f(p_j)-f(p_i)) \nonumber\\
    &=& -\frac{2\rho(1-\rho)}{N}\sum_{i=1}^Nf(p_i) \el^Nf(p_i).\label{expofquadvar}
\end{eqnarray}
Note that 
\begin{eqnarray*}
    \left| \frac{1}{N}\sum_{i=1}^Nf(p_i) \el^Nf(p_i) - \frac{1}{N}\sum_{i=1}^Nf(p_i) \Delta_M f(p_i)\right| &\leq& \frac{1}{N}\sum_{i=1}^N|f(p_i)|\left| \el^Nf(p_i) - \Delta_M f(p_i)\right|\\
    &\leq& \|f\|_\infty E_f(N) \longrightarrow 0 \hspace{1cm} (N\rightarrow \infty).
\end{eqnarray*}
This implies that 
\begin{equation}\label{quadvarconv}
    \lim_{N\rightarrow\infty}\frac{1}{N}\sum_{i=1}^Nf(p_i) \el^Nf(p_i) = \lim_{N\rightarrow\infty}\frac{1}{N}\sum_{i=1}^Nf(p_i) \Delta_M f(p_i) = \int f\Delta_M f\dd \overline V.
\end{equation}
Combining this with~\eqref{expofquadvar}, we conclude that 
\begin{equation}\label{convofexpofcarcha}
    \lim_{N\rightarrow\infty} \E \Gamma^{N,f}(s) = -2\rho(1-\rho) \int f\Delta_M f\dd \overline V = 2\rho(1-\rho) \int (\grad f)^2\dd \overline V.
\end{equation}
\end{proof}

Next we want to prove that $\var{(\Gamma^{N,f}(s))}$ vanishes in the limit, but we first need the following lemma and corollary.
\begin{lemma}\label{carchalemma}
For all $f\in C^\infty$,
\begin{equation}\label{carcha}
    \lim_{N\rightarrow\infty} \sup_{1\leq i\leq N} \left| \sum_{j=1}^N c_{ij}^N (f(p_j)-f(p_i))^2-\left(\Delta_M(f^2)(p_i)-2 f(p_i)\Delta_M f(p_i)\right)\right|=0.
\end{equation}
\end{lemma}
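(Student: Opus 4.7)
The plan is to exploit the algebraic identity
\[
(f(p_j)-f(p_i))^2 = \bigl(f^2(p_j)-f^2(p_i)\bigr) - 2f(p_i)\bigl(f(p_j)-f(p_i)\bigr),
\]
which turns the discrete carré du champ into a linear combination of two discrete Laplacians. Summing against $c_{ij}^N$ and using the definition of $\el^N$ gives the key reformulation
\[
\sum_{j=1}^N c_{ij}^N (f(p_j)-f(p_i))^2 = \el^N(f^2)(p_i) - 2 f(p_i)\,\el^N f(p_i).
\]
Once this identity is in place, the continuous counterpart $\Delta_M(f^2)(p_i) - 2 f(p_i)\Delta_M f(p_i)$ sits in exactly the same form, so the quantity whose sup we must bound is a direct comparison of two pairs of quantities.

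From there I would apply the triangle inequality and factor out $f(p_i)$ to get the pointwise bound
\[
\left|\el^N(f^2)(p_i) - \Delta_M(f^2)(p_i)\right| + 2|f(p_i)|\,\left|\el^N f(p_i) - \Delta_M f(p_i)\right| \;\leq\; E_{f^2}(N) + 2\|f\|_\infty E_f(N).
\]
Here I use that $f\in C^\infty(M)$ implies $f^2\in C^\infty(M)$ and that, since $M$ is compact, $\|f\|_\infty<\infty$. The right-hand side is independent of $i$, so taking the supremum over $i\leq N$ is immediate.

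Finally, assumption~\eqref{approxgrid} applied to both $f$ and $f^2$ gives $E_f(N)\to 0$ and $E_{f^2}(N)\to 0$ as $N\to\infty$, which yields the claim. There is no real obstacle in this argument: the only thing to notice is the algebraic trick in the first step, which lets us avoid any analysis of $|\nabla f|^2$ directly and reduces the lemma to two applications of the already assumed uniform convergence of the graph Laplacian. (The identity $\Delta_M(f^2)-2f\Delta_M f = 2|\nabla f|^2$ is the geometric reason this lemma is useful later, but it plays no role in the proof itself.)
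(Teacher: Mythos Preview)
Your proof is correct and is essentially identical to the paper's own argument: the paper also uses the identity $(f(p_j)-f(p_i))^2 = f(p_j)^2-f(p_i)^2 - 2f(p_i)(f(p_j)-f(p_i))$, applies the triangle inequality, and bounds the two resulting terms by $E_{f^2}(N)$ and $2\|f\|_\infty E_f(N)$ via~\eqref{approxgrid}.
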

\begin{proof}
By writing $(f(p_j)-f(p_i))^2 = f(p_j)^2-f(p_i)^2 -2 f(p_i)(f(p_j)-f(p_i))$ and the triangle inequality we see that~\eqref{carcha} is bounded by
\begin{equation*}
    \sup_{1\leq i\leq N} \left| \sum_{j=1}^N c_{ij}^N (f(p_j)^2-f(p_i)^2)-\Delta_M(f^2)(p_i)\right| + 2 \|f\|_\infty \sup_{1\leq i\leq N} \left| \sum_{j=1}^N c_{ij}^N (f(p_j)-f(p_i))-\Delta_f(p_i)\right|,
\end{equation*}
which goes to $0$ by~\eqref{approxgrid}.
\end{proof}
\begin{cor}\label{boundofcarcha}
For all $f\in C^\infty$, there exists a constant $C$ (depending on $f$) such that for all $N\in \N$
\begin{equation*}
    \sup_{1\leq i\leq N} \sum_{j=1}^N c_{ij}^N (f(p_j)-f(p_i))^2 \leq C.
\end{equation*}
\end{cor}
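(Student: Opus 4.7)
The plan is to deduce the uniform bound directly from Lemma~\ref{carchalemma} together with compactness of $M$ and smoothness of $f$. The key observation is that the previous lemma tells us that the quantity in question is, uniformly in $i$, close to the continuous function $g(p):=\Delta_M(f^2)(p)-2f(p)\Delta_M f(p)$ evaluated at grid points, and that target function is bounded on $M$.

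More precisely, first I would define $g:= \Delta_M(f^2)-2f\,\Delta_M f$. Since $f\in C^\infty(M)$ and $M$ is compact, both $\Delta_M(f^2)$ and $f\,\Delta_M f$ are continuous on $M$, hence bounded. So $\|g\|_\infty<\infty$. Next, by Lemma~\ref{carchalemma}, there exists $N_0\in\N$ such that for all $N\geq N_0$,
\begin{equation*}
    \sup_{1\leq i\leq N}\left|\sum_{j=1}^N c_{ij}^N(f(p_j)-f(p_i))^2 - g(p_i)\right|\leq 1.
\end{equation*}
By the triangle inequality, this yields $\sup_{1\leq i\leq N}\sum_{j=1}^N c_{ij}^N(f(p_j)-f(p_i))^2\leq \|g\|_\infty+1$ for every $N\geq N_0$.

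For the finitely many remaining values $N<N_0$, the quantity $\sup_{1\leq i\leq N}\sum_{j=1}^N c_{ij}^N(f(p_j)-f(p_i))^2$ is a supremum over finitely many finite sums of finite numbers, hence finite. Taking
\begin{equation*}
    C:=\max\left\{\|g\|_\infty+1,\;\max_{N<N_0}\sup_{1\leq i\leq N}\sum_{j=1}^N c_{ij}^N(f(p_j)-f(p_i))^2\right\}
\end{equation*}
gives the desired uniform constant. There is no real obstacle here; the only thing to take care of is handling the finitely many small $N$ separately, which is trivial since each contributes a finite number. The content of the result lies entirely in Lemma~\ref{carchalemma} providing a continuous, hence bounded, asymptotic majorant.
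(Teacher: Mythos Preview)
Your proof is correct and follows essentially the same approach as the paper: both apply the triangle inequality with Lemma~\ref{carchalemma} to bound the sum by $\|g\|_\infty$ plus an $o(1)$ error term. The only cosmetic difference is that the paper notes directly that the $o(1)$ error is bounded in $N$, whereas you split into $N\geq N_0$ and $N<N_0$; neither version contains any additional content.
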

\begin{proof}
For all $i\leq N$ by lemma~\ref{carchalemma}
\begin{eqnarray*}
    \sum_{j=1}^N c_{ij}^N (f(p_j)-f(p_i))^2 &\leq& \sup_{1\leq i\leq N} \left|\Delta_M(f^2)(p_i)-2 f(p_i)\Delta_f(p_i)\right| \\
    &+& \sup_{1\leq i\leq N} \left| \sum_{j=1}^N c_{ij}^N (f(p_j)-f(p_i))^2-\left(\Delta_M(f^2)(p_i)-2 f(p_i)\Delta_f(p_i)\right)\right| \\
    &\leq& \|\Delta_M f^2\|_\infty + 2 \|f\|_\infty\|\Delta_Mf\|_\infty + h_f(N),
\end{eqnarray*}
where $h_f(N)=o(1)$. Since this bound does not depend on $i$ and is bounded in $N$, the result follows.
\end{proof}
Now we can prove the following lemma.
\begin{lemma}
For all $f\in C^\infty$,
\begin{equation*}
    \lim_{N\rightarrow\infty} \var \Gamma^{N,f}(s) = 0.
\end{equation*}
\end{lemma}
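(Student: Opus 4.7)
The plan is to expand $\var \Gamma^{N,f}(s)$ as a quadruple sum of covariances and exploit the fact that $\eta_s$ is a product of independent Bernoullis together with the uniform bound from Corollary~\ref{boundofcarcha}. Writing $a_{ij}^N := c_{ij}^N (f(p_j)-f(p_i))^2$ and $X_{ij}(s) := (\eta_s(p_j)-\eta_s(p_i))^2$, the expression derived for $\Gamma^{N,f}(s)$ in~\eqref{quadvar} becomes
\begin{equation*}
    \Gamma^{N,f}(s) = \frac{1}{2N} \sum_{i,j=1}^N a_{ij}^N X_{ij}(s),
\end{equation*}
so that
\begin{equation*}
    \var \Gamma^{N,f}(s) = \frac{1}{4N^2} \sum_{i,j,k,l=1}^N a_{ij}^N a_{kl}^N \Cov(X_{ij}(s), X_{kl}(s)).
\end{equation*}

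The first key step is to note that under $\nu^N_\rho$, the coordinates $\eta_s(p_m)$ are independent, and $X_{ij}(s)$ depends only on the two coordinates $\eta_s(p_i), \eta_s(p_j)$. Hence $\Cov(X_{ij}(s), X_{kl}(s)) = 0$ whenever $\{i,j\} \cap \{k,l\} = \emptyset$. Since the $X_{ij}$ take values in $\{0,1\}$, each surviving covariance is bounded by an absolute constant. So it remains to estimate
\begin{equation*}
    \frac{1}{N^2} \sum_{\substack{i,j,k,l \\ \{i,j\}\cap\{k,l\}\neq\emptyset}} a_{ij}^N a_{kl}^N.
\end{equation*}

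Second, I would split this sum according to which index is shared (say $i = k$, and then the remaining cases by symmetry). For each fixed shared index, one of the two remaining indices is free in an $a$-sum, and by Corollary~\ref{boundofcarcha} each single sum $\sum_{j} a_{ij}^N$ is bounded by a constant $C$ uniformly in $i$ and $N$. Thus a typical overlap contribution looks like
\begin{equation*}
    \sum_{i=1}^N \Bigl(\sum_{j=1}^N a_{ij}^N\Bigr)\Bigl(\sum_{l=1}^N a_{il}^N\Bigr) \leq N C^2,
\end{equation*}
and the total number of overlap patterns is a fixed constant. Dividing by $N^2$ gives a bound of order $1/N$, which vanishes as $N \to \infty$.

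The main obstacle is just bookkeeping the overlap cases (there are a handful: $i=k$, $i=l$, $j=k$, $j=l$, and combinations thereof that overcount); by symmetry of $a^N$ in its two indices these all reduce to the same estimate. No further geometric input beyond the uniform bound of Corollary~\ref{boundofcarcha} and the independence of $\eta_s$ under the product Bernoulli measure is required.
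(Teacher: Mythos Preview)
Your proposal is correct and follows essentially the same approach as the paper: expand the variance as a quadruple sum, use independence under $\nu_\rho^N$ to kill the terms with $\{i,j\}\cap\{k,l\}=\emptyset$, bound the surviving covariances by a constant, and reduce by symmetry to a single overlap pattern controlled via Corollary~\ref{boundofcarcha}. The only (minor) difference is in the final estimate: the paper bounds just one of the two inner sums by $C$ and then uses the convergence~\eqref{quadvarconv} to handle the remaining $\frac{1}{N^2}\sum_{i,j} a_{ij}^N$, whereas you apply the corollary to \emph{both} inner sums to get $\sum_i C^2 = NC^2$ and hence a clean $O(1/N)$ bound. Your route is slightly more direct; the paper's route additionally exhibits the connection to the Dirichlet-form quantity $\int f\Delta_M f\,\dd\overline V$, but this is not needed for the variance lemma itself.
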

\begin{proof}
Using~\eqref{quadvar}, we see that the variance of~\eqref{carchaexp} equals
\begin{equation*}
    \frac{1}{4N^2} \sum_{i,j,k,l=1}^N c_{ij}^Nc_{kl}^N(f(p_j)-f(p_i))^2(f(p_l)-f(p_k))^2 \Cov((\eta_s(p_j)-\eta_s(p_i))^2,(\eta_s(p_l)-\eta_s(p_k))^2).
\end{equation*}
Now note that $|\Cov((\eta_s(p_j)-\eta_s(p_i))^2,(\eta_s(p_l)-\eta_s(p_k))^2)|\leq 1$ since both random variables in the covariance take values in $\{0,1\}$. Moreover,
\begin{eqnarray*}
    \Cov((\eta_s(p_j)-\eta_s(p_i))^2,(\eta_s(p_l)-\eta_s(p_k))^2)\neq 0 &\implies& (\eta_s(p_j)-\eta_s(p_i))^2\not\perp (\eta_s(p_l)-\eta_s(p_k))^2 \\
    &\implies& (i=k \text{ or } i=l \text{ or } j=k \text{ or } j=l).
\end{eqnarray*}
Together, this implies that
\begin{equation*}
    |\Cov((\eta_s(p_j)-\eta_s(p_i))^2,(\eta_s(p_l)-\eta_s(p_k))^2)|\leq \delta_{ik}+\delta_{il}+\delta_{jk}+\delta_{jl}.
\end{equation*}
Now it by positivity of the summands and symmetry it suffices to show that
\begin{equation}\label{carchawithind}
    \frac{1}{4N^2} \sum_{i,j,k,l=1}^N c_{ij}^Nc_{kl}^N(f(p_j)-f(p_i))^2(f(p_l)-f(p_k))^2 \delta_{ik}
\end{equation}
goes to $0$. By rearranging we see that~\eqref{carchawithind} equals
\begin{equation}\label{carchawithoutind}
    \frac{1}{4N^2} \sum_{i,j=1}^N c_{ij}^N(f(p_j)-f(p_i))^2\sum_{l=1}^Nc_{il}^N(f(p_l)-f(p_i))^2.
\end{equation}
By corollary~\ref{boundofcarcha}, there exists $C=C(f)>0$ such that~\eqref{carchawithoutind} is bounded by
\begin{eqnarray*}
    &&\frac{C}{4N^2} \sum_{i,j=1}^N c_{ij}^N(f(p_j)-f(p_i))^2\\
    &=& -\frac{C}{2N} \frac{1}{N} \sum_{i=1}^N f(p_i) \sum_{j=1}^N c_{ij}^N (f(p_j)-f(p_i))\\
    &\longrightarrow& 0\cdot \int f\Delta_M f\dd \overline V = 0,
\end{eqnarray*}
where in the last line we used~\eqref{expofquadvar} and~\eqref{quadvarconv}. This implies that 
\begin{equation}\label{convofvarofcarcha}
    \lim_{N\rightarrow\infty} \var\left(\Gamma^{N,f}(s)\right) = 0.
\end{equation}
\end{proof}

Putting this together, we can prove the following.
\begin{lemma}\label{lemma:quadvarconv}
For all $f\in C^\infty$,
\begin{equation*}
    \lim_{N\rightarrow\infty} \E \left(\int_0^t \Gamma^{N,f}(s)\dd s - 2t\rho(1-\rho)\int (\nabla f)^2 \dd \overline V\right)^2 = 0.
\end{equation*}
\end{lemma}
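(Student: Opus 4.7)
The plan is to decompose the squared expectation into a variance term and a bias term, and use the two preceding lemmas (convergence of $\E\Gamma^{N,f}(s)$ and vanishing of $\var\Gamma^{N,f}(s)$) to handle each separately. The key observation is that since the process $\eta^N$ is started in the stationary measure $\nu_\rho^N$, the one-time marginal distribution of $\eta_s$ does not depend on $s$. Hence both $\E\Gamma^{N,f}(s)$ and $\var\Gamma^{N,f}(s)$ are constant in $s$, and we may denote them $m_N$ and $v_N$ respectively.

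Writing $c := 2\rho(1-\rho)\int(\nabla f)^2\,\dd\overline{V}$, I would use the elementary identity $\E(X-c)^2 = \var(X) + (\E X - c)^2$ with $X = \int_0^t \Gamma^{N,f}(s)\,\dd s$. By stationarity, $\E X = t\, m_N$, and the bias term becomes $(tm_N - tc)^2 = t^2(m_N - c)^2$, which tends to $0$ by~\eqref{convofexpofcarcha}.

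For the variance term, I would expand it as a double integral of covariances:
\begin{equation*}
\var(X) = \int_0^t\!\int_0^t \Cov(\Gamma^{N,f}(s),\Gamma^{N,f}(u))\,\dd s\,\dd u.
\end{equation*}
The Cauchy--Schwarz inequality for covariances gives $|\Cov(\Gamma^{N,f}(s),\Gamma^{N,f}(u))|\leq \sqrt{\var \Gamma^{N,f}(s)\,\var\Gamma^{N,f}(u)} = v_N$, again using stationarity of the marginals. Thus $\var(X)\leq t^2 v_N \to 0$ by~\eqref{convofvarofcarcha}. Combining the two vanishing contributions yields the claim.

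There is no real obstacle here, since both the expectation and the variance of $\Gamma^{N,f}(s)$ have already been controlled uniformly in $s$ by the previous two lemmas; the only thing to verify is that stationarity of $\nu_\rho^N$ under the SEP dynamics makes both $\E\Gamma^{N,f}(s)$ and $\var\Gamma^{N,f}(s)$ independent of $s$, which is immediate since $\Gamma^{N,f}(s)$ is a fixed function of $\eta_s$ and the law of $\eta_s$ equals $\nu_\rho^N$ for every $s\geq 0$.
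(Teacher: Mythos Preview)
Your proof is correct and follows essentially the same approach as the paper: both use stationarity to make $\E\Gamma^{N,f}(s)$ and $\var\Gamma^{N,f}(s)$ independent of $s$, and both combine the bias--variance decomposition $\E(X-c)^2=\var(X)+(\E X-c)^2$ with the two preceding lemmas to obtain the bound $t^2\big(v_N+(m_N-c)^2\big)\to 0$. The only cosmetic difference is the order of operations: the paper first applies Jensen to push the square inside the time integral and then decomposes $\E(\Gamma^{N,f}(s)-c)^2$, whereas you decompose $\E(X-tc)^2$ first and then control $\var(X)$ via the covariance expansion and Cauchy--Schwarz; the resulting estimate is identical.
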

\begin{proof}
Combining~\eqref{convofexpofcarcha} and~\eqref{convofvarofcarcha} and realizing that in both cases the speed of convergence does not depend on $s$, we obtain
\begin{eqnarray*}
    &&\E \left(\int_0^t \Gamma^{N,f}(s)\dd s - 2t\rho(1-\rho)\int (\nabla f)^2 \dd \overline V\right)^2 \leq t \int_0^t\E \left( \Gamma^{N,f}(s) - 2\rho(1-\rho)\int (\nabla f)^2 \dd \overline V\right)^2 \dd s\\
    &=& t^2 \left( \var\Gamma^{N,f}(0) + \left(\E\Gamma^{N,f}(0) - 2\rho(1-\rho)\int (\nabla f)^2 \dd \overline V\right)^2\right)\rightarrow 0.
\end{eqnarray*}
We used here that for a constant $c$, $\E (X-c)^2 = \var(X)+(\E X-c)^2$.
\end{proof}
\section{Tightness}\label{sec:tightness}
In this section we show tightness. Note that for fixed $N\in\N$ and $f\in C^\infty$, $Y^N_\cdot(f)$ is a trajectory in $D([0,T],\R)$. 
Since $C^\infty$ is a nuclear space, by~\cite[Thm 4.1]{mitoma1983tightness} it suffices to prove for fixed $f\in C^\infty$ that $(Y^N_\cdot(f),N\in\N)$ a tight collections of random elements of $D([0,T],\R)$. By~\cite[Section 4.1]{kipnis1999scaling} it suffices to show that Aldous' criterion holds, i.e. that 
\begin{enumerate}[(i)]
    \item for each $t\in [0,T]$ and $\epsilon>0$, there is a compact $K(t,\epsilon)\subset \R$ such that $$\sup_N \LL^N(Y_t^N(f)\notin K(t,\epsilon))\leq \epsilon$$\label{tight1}
    \item \label{tight2} for all $\epsilon >0$ $$\lim_{\gamma\rightarrow 0} \limsup_{N\rightarrow\infty} \sup_{\tau\in \mathscr{T}_T,\theta\leq \gamma} \LL^N\left(\left|Y_\tau^N(f)-Y_{\tau+\theta}^N(f)\right|>\epsilon\right)=0$$
\end{enumerate}
\begin{proof}
We study more carefully the distribution of $Y^N_t(f)$. Since $\nu^N_\rho$ is invariant under the SEP dynamics, we know that the $\eta_t(p_i)$'s are iid Bernoulli with parameter $\rho$. This means that $(\eta_t(p_i)-\rho)$ has mean $0$ and variance $\rho(1-\rho)$. Since they are independent, we see that $Y_t^N(f)$ has mean $0$ and variance
\begin{equation*}
    \var\left(\frac{1}{\sqrt N}\sum_{i=1}^N f(p_i) (\eta_t(p_i)-\rho)\right) = \frac{1}{N}\sum_{i=1}^N f^2(p_i)\rho(1-\rho).
\end{equation*}
By the central limit theorem, the distribution of $Y_t^N(f)$ converges to the $N\left(0,\rho(1-\rho)\int f^2\dd \overline V\right)$ distribution. This implies tightness of $(Y_t^N(f),N\in\N)$, which is~\eqref{tight1}.
For~\eqref{tight2} we use the Dynkin martingale representation~\eqref{Dmgale} to write
\begin{equation}
    Y_t^N(f)= M_t^{N,f}+Y_0^N(f)+\int_0^t Y_s^N(\el^N f)\dd s.\label{dynk}
\end{equation}
It suffices to show~\eqref{tight2} for the integral term and the martingale term of~\eqref{dynk}. For the integral term we first calculate the following (where $\E$ refers to the expectation with respect to $\LL^N$).
\begin{eqnarray}
    \E\left(\int_\tau^{\tau+\theta}Y^N_s(\el^Nf)\dd s\right)^2 &\leq&
    \theta\E  \int_\tau^{\tau+\theta} \left(Y^N_s(\el^Nf)\right)^2 \dd s \leq
    \theta\E  \int_0^T \left(Y^N_s(\el^Nf)\right)^2 \dd s \nonumber \\
    &=& \theta \int_0^T \E \left(Y^N_s(\el^Nf)\right)^2 \dd s
    = \theta \int_0^T \frac{1}{N}\sum_{i=1}^{N}\el^Nf(p_i)^2 \rho(1-\rho) \dd s \nonumber  \\
    &=& \theta T \rho(1-\rho) \frac{1}{N}\sum_{i=1}^{N}\el^Nf(p_i)^2 \label{expr}
\end{eqnarray}
Now note that
\begin{eqnarray*}
    \frac{1}{N}\sum_{i=1}^{N}\el^Nf(p_i)^2 &\leq& \frac{1}{N}\sum_{i=1}^{N}(|\Delta_Mf(p_i)|+E_{p_i}(N))^2
    \leq \frac{2}{N}\sum_{i=1}^{N}\Delta_Mf(p_i)^2 + \frac{2}{N}\sum_{i=1}^{N}E_{p_i}(N))^2\\
    &\leq& \frac{2}{N}\sum_{i=1}^{N}\Delta_Mf(p_i)^2 + 2 E(N)^2 \rightarrow 2\int (\Delta_Mf)^2\dd \overline V + 0,
\end{eqnarray*}
which implies that there exists some $C>0$ independent of $N$ such that~\eqref{expr} is bounded by $\theta T \rho(1-\rho)C$. Now we see that
\begin{eqnarray*}
    &&\lim_{\gamma\rightarrow 0} \limsup_{N\rightarrow\infty} \sup_{\tau\in \mathscr{T}_T,\theta\leq \gamma} \LL^N \left(\left|Y_\tau^N(f)-Y_{\tau+\theta}^N(f)\right|>\epsilon\right) \leq \lim_{\gamma\rightarrow 0} \limsup_{N\rightarrow\infty} \sup_{\tau\in \mathscr{T}_T,\theta\leq \gamma} \frac{1}{\epsilon^2}\E\left(\int_\tau^{\tau+\theta}Y^N_s(\el^Nf)\dd s\right)^2
    \\ &\leq& \lim_{\gamma\rightarrow 0} \limsup_{N\rightarrow\infty} \sup_{\tau\in \mathscr{T}_T,\theta\leq \gamma} \frac{\theta T \rho(1-\rho)C}{\epsilon^2} = \lim_{\gamma\rightarrow 0} \limsup_{N\rightarrow\infty} \frac{\gamma T \rho(1-\rho)C}{\epsilon^2} = 0.
\end{eqnarray*}
Now for the martingale term, by the martingale property we see that
\begin{eqnarray*}
    \E \left(M_{\tau+\theta}^{N,f}-M_{\tau}^{N,f}\right)^2 &=& \E \left(\langle M^{N,f},M^{N,f}\rangle_{\tau+\theta}-\langle M^{N,f},M^{N,f}\rangle_s\right)\\
    &=& \E \int_\tau^{\tau+\theta} L^N(\phi^{N,f})^2(\eta_s)-2\phi^{N,f}(\eta_s)L^N\phi^{N,f}(\eta_s)\dd s.
\end{eqnarray*}
By~\eqref{quadvar}, the latter equals
\begin{eqnarray*}
    &&\frac{1}{2N}\sum_{i,j=1}^{N}c_{ij}^N (f(p_j)-f(p_i))^2 \hspace{1mm} \E \int_\tau^{\tau+\theta} (\eta_s(p_i)-\eta_s(p_j))^2 \dd s \leq  \frac{1}{2N}\sum_{i,j=1}^{N}c_{ij}^N (f(p_j)-f(p_i))^2 \theta\\
    &=& - \theta \frac{1}{N}\sum_{i,j=1}^{N}c_{ij}^N (f(p_j)-f(p_i))f(p_i) = -\theta \frac{1}{N}\sum_{i=1}^N f(p_i) \el^Nf(p_i) \longrightarrow -\theta \int f\Delta_Mf\dd \overline V = \theta \int (\grad f)^2\dd \overline{V},
\end{eqnarray*}
where in the last line we used~\eqref{quadvarconv}. This implies that there exists some $C>0$ independent of $N$ such that
\begin{equation*}
    \LL^N\left(\left|M_{\tau+\theta}^{N,f}-M_{\tau}^{N,f}\right|>\epsilon\right)\leq \frac{1}{\epsilon^2}\E \left(M_{\tau+\theta}^{N,f}-M_{\tau}^{N,f}\right)^2  \leq \frac{\theta C}{\epsilon^2}.
\end{equation*}
As with the integral term, this implies~\eqref{tight2}.
\end{proof}
\section{Uniqueness of limits of subsequences}\label{sec:uniqueness}

Now let $\LL^*$ be the limit of a subsequence $\LL_{N_k}$. We want to show that $\LL^*$ satisfies certain initial conditions and a martingale problem, which will then uniquely determine it.

First of all the initial condition can be shown to be a Gaussian field in exactly the same way as~\cite[Chapter 11 Lemma 2.1]{kipnis1999scaling}, i.e. $\LL^*$ restricted to $\mathscr{F}_0$ is a Gaussian field with covariance
\begin{equation}\label{initdist}
    \E[Y_0(f)Y_0(g)]=\rho(1-\rho)\int fg \dd \overline V.
\end{equation}

We will need the following lemma.
\begin{lemma}\label{contpaths}
Let $Y$ have distribution $\LL^*$. Then for each smooth $f$, $Y(f)$ is continuous almost surely.
\end{lemma}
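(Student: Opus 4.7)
The plan is to control the maximum jump size of the approximating trajectories $Y^{N_k}_\cdot(f)$ deterministically and then transfer the bound to the subsequential limit by weak convergence. An elementary transition of the SEP swaps the occupation numbers at two sites $p_i,p_j$, and from the computation already carried out in section~\ref{sec:mgales} this changes $Y^N_\cdot(f)$ by $\pm\frac{1}{\sqrt N}(f(p_j)-f(p_i))$. In particular the jump functional
\begin{equation*}
    J(\omega):=\sup_{0\leq t\leq T}|\omega(t)-\omega(t-)|
\end{equation*}
satisfies $J(Y^N_\cdot(f))\leq 2\|f\|_\infty/\sqrt N$ almost surely, which tends to $0$ as $N\to\infty$.

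To push this bound to the limit, I would use that evaluation at $f$ is a continuous linear map $(C^\infty)'\to\R$, so the induced map $D([0,T],(C^\infty)')\to D([0,T],\R)$ is continuous for the Skorokhod topologies; hence $Y^{N_k}_\cdot(f)$ converges in distribution to $Y_\cdot(f)$ in $D([0,T],\R)$. I would then invoke the standard fact that $J$ is lower semicontinuous on $D([0,T],\R)$: if $\omega_n\to\omega$ in the Skorokhod sense and $\omega$ has a jump of size $c$ at some time, then for $n$ large $\omega_n$ has a jump of magnitude close to $c$ near that time, so $\liminf_n J(\omega_n)\geq J(\omega)$. Equivalently, the set $\{J>\epsilon\}$ is open in $D([0,T],\R)$ for every $\epsilon>0$.

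Applying the Portmanteau theorem to this open set gives
\begin{equation*}
    \p\bigl(J(Y_\cdot(f))>\epsilon\bigr)\leq \liminf_{k\to\infty}\p\bigl(J(Y^{N_k}_\cdot(f))>\epsilon\bigr)=0
\end{equation*}
for every $\epsilon>0$, since for $k$ large one has $2\|f\|_\infty/\sqrt{N_k}<\epsilon$. Letting $\epsilon\downarrow 0$ yields $J(Y_\cdot(f))=0$ almost surely, which is exactly the statement that $t\mapsto Y_t(f)$ has continuous sample paths.

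The only delicate point, and the one place where it is easy to slip up, is to use the \emph{open} set $\{J>\epsilon\}$ together with lower semicontinuity of $J$; the closed-looking set $\{J\geq\epsilon\}$ would require upper semicontinuity, which fails (a continuous path with $J=0$ can be a Skorokhod limit of step functions with prescribed jump sizes). With the lower-semicontinuous formulation the argument is a direct application of Portmanteau and requires no further estimates beyond the deterministic bound on the jumps.
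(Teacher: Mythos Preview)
Your argument is correct, and in fact it is a bit cleaner than what the paper does. The paper also uses the deterministic jump bound $\sup_t|Y^N_t(f)-Y^N_{t-}(f)|\leq 2\|f\|_\infty/\sqrt N$, but instead of applying Portmanteau directly to the jump functional it passes through the modulus-of-continuity machinery: it recalls from the tightness section that Aldous' criterion gives $\lim_{\delta\to 0}\limsup_N\LL^N(w'_\delta(Y^N(f))\geq\epsilon)=0$, feeds this together with the jump bound into the inequality $w_\delta(X)\leq 2w'_\delta(X)+\sup_t|X_t-X_{t-}|$ to obtain the same statement for $w_\delta$, and then invokes the standard fact that this forces the limiting trajectory to be continuous. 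Your route avoids re-using the Aldous estimate and the $w_\delta/w'_\delta$ apparatus entirely, relying only on the lower semicontinuity of $J$; the paper's route has the small advantage that it stays within the textbook tightness framework without isolating that semicontinuity fact. Both proofs rely implicitly on the continuity of the evaluation map $D([0,T],(C^\infty)')\to D([0,T],\R)$ to identify $Y_\cdot(f)$ as the weak limit of $Y^{N_k}_\cdot(f)$; in the paper this is justified later via \cite[Thm~1.7]{jakubowski1986skorokhod}.
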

\begin{proof}
Fix $f\in C^\infty$. By Aldous' tightness criterion (which we showed in section~\ref{sec:tightness}), we know that
\begin{equation*}
    \lim_{\delta\rightarrow 0}\limsup_{N\rightarrow\infty} \LL^N(w'_\delta(Y^N(f))\geq \epsilon) = 0.
\end{equation*}
Now note that
\begin{equation*}
    w_\delta(X) \leq 2w'_\delta(X) + \sup_t |X_t-X_{t-}|.
\end{equation*}
Since in our case the last term can be a.s. bounded by $2N^{-1/2}\|f\|_\infty$, we get
\begin{equation*}
    \lim_{\delta\rightarrow 0}\limsup_{N\rightarrow\infty} \LL^N(w_\delta(Y^N(f))\geq \epsilon) = 0.
\end{equation*}
This implies a.s. continuity of $Y(f)$.
\end{proof}

Now we can show that $Y$ satisfies a martingale problem under $\LL^*$.
\begin{prop}\label{mgaleprob}
Recall from~\eqref{eq:OU-mgale-def} that we define for $Y\in \D([0,T],(C^\infty)')$ and $f\in C^\infty$,
\begin{eqnarray}
    M_t^f&:=&Y_t(f)-Y_0(f) - \int_0^t Y_s(\Delta_Mf)\dd s\label{mgaleprobeq}\\
    N_t^f&:=& (M_t^f)^2 - 2t\rho(1-\rho)\int (\grad f)^2 \dd \overline V.\nonumber
\end{eqnarray}
Then for each $f\in C^\infty$, $M^f$ and $N^f$ are martingales with respect to the natural filtration under $\LL^*$.
\end{prop}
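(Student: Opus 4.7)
The plan is to pass to the limit along the convergent subsequence $\LL_{N_k}\to\LL^*$ in the martingale identities already satisfied by $M^{N,f}$ and $N^{N,f}$ under $\LL_N$. Fix $0\leq s<t\leq T$, an integer $m\geq 1$, times $0\leq t_1<\cdots<t_m\leq s$, test functions $g_1,\ldots,g_m\in C^\infty$, and a bounded continuous $H:\R^m\to\R$. Writing $\mathcal H(Y):=H(Y_{t_1}(g_1),\ldots,Y_{t_m}(g_m))$, it suffices to prove
\[
\E_{\LL^*}\bigl[(M_t^f-M_s^f)\,\mathcal H(Y)\bigr]=0\quad\text{and}\quad \E_{\LL^*}\bigl[(N_t^f-N_s^f)\,\mathcal H(Y)\bigr]=0.
\]

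For $M^f$: combining the Dynkin identity $\E_{\LL_N}[(M_t^{N,f}-M_s^{N,f})\mathcal H(Y^N)]=0$ with the representation~\eqref{Dmgale} and Lemma~\ref{lemma:replacelaplacian}, the same identity with $\el^N f$ replaced by $\Delta_Mf$ holds up to an error term that vanishes as $N\to\infty$. What remains is the test functional
\[
\Phi(Y)=\left(Y_t(f)-Y_s(f)-\int_s^tY_u(\Delta_Mf)\,\dd u\right)\mathcal H(Y),
\]
which is continuous on $D([0,T],(C^\infty)')$ at every path whose scalar coordinates $u\mapsto Y_u(h)$, for $h$ in the finite family $\{f,\Delta_Mf,g_1,\ldots,g_m\}$, are continuous at the relevant times (for the time integral the standard fact that Skorokhod convergence to a continuous limit is uniform on $[0,T]$ takes care of things). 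Lemma~\ref{contpaths} applied to each such $h$ places $\LL^*$ on this continuity set. The uniform bound $\E_{\LL_N}(Y_u^N(h))^2\leq\rho(1-\rho)\|h\|_\infty^2$ together with the boundedness of $H$ gives uniform $L^2$ control of $\Phi(Y^{N_k})$, hence uniform integrability. Weak convergence combined with uniform integrability then upgrades $\E_{\LL_{N_k}}[\Phi(Y^{N_k})]\to\E_{\LL^*}[\Phi(Y)]$, yielding the martingale identity for $M^f$.

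For $N^f$: the martingale property of $N^{N,f}$ gives
\[
\E_{\LL_N}\!\left[\left((M_t^{N,f})^2-(M_s^{N,f})^2-\int_s^t\Gamma^{N,f}(u)\,\dd u\right)\mathcal H(Y^N)\right]=0.
\]
Lemma~\ref{lemma:quadvarconv} replaces $\int_s^t\Gamma^{N,f}(u)\,\dd u$ by the deterministic constant $2(t-s)\rho(1-\rho)\int(\grad f)^2\,\dd\overline V$ in $L^2$. For the squared-martingale part, I would run the same continuous-mapping-plus-uniform-integrability scheme, which now requires a uniform $L^p$ bound on $M_t^{N,f}$ for some $p>2$. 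This bound is available since $M^{N,f}$ is a pure-jump martingale whose jumps are of size at most $2\|f\|_\infty/\sqrt N$ and whose predictable quadratic variation $\int_0^t\Gamma^{N,f}(u)\,\dd u$ has uniformly bounded first and second moments (via Lemma~\ref{lemma:quadvarconv}); the Burkholder-Davis-Gundy inequality then yields $\sup_N\E_{\LL_N}(M_t^{N,f})^4<\infty$, which in turn gives uniform integrability of $(M_t^{N,f})^2\mathcal H(Y^N)$ and $(M_s^{N,f})^2\mathcal H(Y^N)$.

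The main technical obstacle is securing this uniform $L^p$ control on the martingale for the $N^f$ step; once it is in hand, everything else reduces to a routine continuous-mapping argument. A secondary subtlety is the explicit identification of the continuity set of $\Phi$ on Skorokhod space, which is resolved by simultaneously invoking Lemma~\ref{contpaths} for the finitely many smooth test functions appearing in the functional.
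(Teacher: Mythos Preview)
Your approach is essentially the paper's: pass the Dynkin martingale identities to the limit by replacing $\el^N f$ with $\Delta_M f$ (Lemma~\ref{lemma:replacelaplacian}) and $\int\Gamma^{N,f}$ with the deterministic constant (Lemma~\ref{lemma:quadvarconv}), then invoke continuous mapping plus uniform integrability. Two remarks on the execution.

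\textbf{Fourth moment bound.} The paper obtains $\sup_N\E_{\LL_N}(M_t^{N,f})^4<\infty$ (and similarly for $M_t^f$) by a direct expansion of the fourth moment of $Y^N_s(h)$ into a quadruple sum and counting the $O(N^2)$ non-vanishing terms, rather than via BDG. Your BDG route is a legitimate alternative, but note that the standard BDG inequality uses the \emph{optional} bracket $[M]$, not the predictable one; to control $\E[M^{N,f}]_t^2$ from $\E\langle M^{N,f}\rangle_t^2$ you need either the Lenglart--L\'epingle--Pratelli form of the inequality (which carries an extra jump term, harmless here since $|\Delta M^{N,f}|\leq 2\|f\|_\infty/\sqrt N$) or a separate estimate relating the two brackets. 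The paper's direct moment computation sidesteps this entirely.

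\textbf{Non-metrizability of the path space.} You apply weak convergence and the continuous-mapping theorem directly on $D([0,T],(C^\infty)')$. The paper is explicit that this space is \emph{not metrizable}, so Portmanteau-type arguments are not immediately available there. Its remedy is to first push forward through the continuous projection
\[
P_1:D([0,T],(C^\infty)')\to D([0,T],\R)^{n+2},\qquad Y_\cdot\mapsto\bigl(Y_\cdot(f),\,Y_\cdot(\Delta_Mf),\,Y_\cdot(H_1),\ldots,Y_\cdot(H_n)\bigr),
\]
whose continuity follows from~\cite[Thm~1.7]{jakubowski1986skorokhod}, and then run the continuous-mapping argument in the metric space $D([0,T],\R)^{n+2}$. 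Your proposal glosses over this step; it is not a fatal gap, since exactly the finite family of test functions you single out is what $P_1$ uses, but as written your invocation of ``weak convergence combined with uniform integrability'' on the non-metric space is incomplete.
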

\begin{proof}
The proof is analogous to the proof of~\cite[Chapter 11 Prop 2.3]{kipnis1999scaling}. First recall
\begin{eqnarray*}
    M_t^{N,f}&=&Y_t(f)-Y_0(f) - \int_0^t Y_s(\el^Nf)\dd s\\
    N_t^{N,f}&=& (M_t^{N,f})^2 - \int_0^t \Gamma^{N,f}(s)\dd s.
\end{eqnarray*}
With a slight abuse of notation, we interpret $M_t^f$ (similarly $M_t^{N,f}, N_t^f, N_t^{N,f}$) here as a function $\D([0,T],(C^\infty)')\rightarrow \R$. When writing $\E_N M_t^f$ we mean the expectation of this function as a function of $Y^N$ and when writing $\E M_t^f$ we regard it as a function of the random element of $\D([0,T],(C^\infty)')$ with law $\LL^*$.

Fix $0\leq s\leq t \leq T$. We want to show that under $\LL^*$
\begin{equation*}
    \E [M_t^f| \mathscr{F}_s]= M_s^f\quad\text{and}\quad \E [N_t^f| \mathscr{F}_s]= N_s^f.
\end{equation*}
Fix $n\in \mathbb N, s\geq 0, 0\leq s_1 \leq..\leq s_n\leq s, H_1,..,H_n\in C^\infty, \Psi\in C_b(\R^n)$ and define
\begin{eqnarray*}
    I:&& \D([0,T],(C^\infty)')\rightarrow \R \\
    I:&& Y \mapsto \Psi(Y_{s_1}(H_1),..,Y_{s_n}(H_n)).
\end{eqnarray*}
Now it suffices to show that
\begin{equation}\label{eq:toshowuniqueness}
    \lim_{N\rightarrow\infty} \E_N M_t^{N,f}I(Y)  = \E M_t^fI(Y), \quad\quad \lim_{N\rightarrow\infty} \E_N N_t^{N,f}I(Y)  = \E N_t^fI(Y),
\end{equation}
since then by the martingale property of $M_t^{N,f}$
\begin{equation*}
    \E M_t^fI(Y) = \lim_{N\rightarrow\infty} \E_N M_t^{N,f}I(Y) = \lim_{N\rightarrow\infty} \E_N M_s^{N,f}I(Y) = \E M_s^fI(Y)
\end{equation*}
and analogous for the $N_t^f$ case, which implies that $M_t^f$ and $N_t^f$ are martingales under $\LL^*$.

We start with the first martingale.
First we show that we can replace $M_t^{N,f}$ by $M_t^f$ in the first expectation in~\eqref{eq:toshowuniqueness}. Using Jensen we see that
\begin{equation*}
    \left(\E_N M_t^{N,f}I(Y) - \E_N M_t^fI(Y)\right)^2 \leq
    \|\Psi\|_\infty^2\E_N (M_t^{N,f}-M_t^f)^2
    = \|\Psi\|_\infty^2\E_N \left( \int_0^t Y_s(\Delta_Mf)\dd s -\int_0^t Y_s(\el^Nf)\dd s \right)^2,
\end{equation*}
which goes to $0$ by lemma~\ref{lemma:replacelaplacian}.
Now it remains to show that
\begin{equation*}
    \lim_{N\rightarrow\infty} \E_N M_t^f I(Y)  = \E M_t^fI(Y).
\end{equation*}
First of all note that
\begin{eqnarray*}
    \E_N (M_t^f)^2 &\leq& 4 \left(\E_N Y_t(f)^2 + \E_N Y_0(f)^2+\E_N \left(\int_0^t Y_s(\Delta_Mf)\dd s\right)^2\right) \\
    &\leq& 2\rho(1-\rho) \int f^2\dd \overline V + t^2 \rho(1-\rho) \int (\Delta_Mf)^2\dd \overline V +o(1),
\end{eqnarray*}
which implies that there exists $C>0$ such that
\begin{equation*}
    \sup_{N\in\mathbb N} \E_N (M_t^fI(Y))^2\leq \|\Psi\|_\infty^2\sup_{N\in\mathbb N} \E_N (M_t^f)^2\leq C<\infty.
\end{equation*}
This implies that the $M_t^fI(Y)$ under $\LL^N$ are uniformly integrable, so it suffices to show that $M_t^fI(Y)$ under $\LL^N$ converges to $M_t^fI(Y)$ under $\LL^*$ in distribution. We proceed in steps. First of all, consider the mapping
\begin{eqnarray*}
    P_1:D([0,T],(C^\infty)') &\longrightarrow& D([0,T],\R)^{n+2}\\
    Y_\cdot &\longmapsto& (Y_\cdot(f),Y_\cdot(\Delta_M f),Y_\cdot(H_1),..,Y_\cdot(H_n))
\end{eqnarray*}
By~\cite[Thm 1.7]{jakubowski1986skorokhod} each of the components is continuous, hence $P_1$ is continuous. This implies that $P_1(Y^N)$ converges in distribution to $P_1(Y)$. Now consider the mapping
\begin{eqnarray*}
    P_2: D([0,T],\R)^{n+2}&\longrightarrow& \R\\
    (X^1,X^2,..,X^{n+2}) &\longmapsto& (X^1_t-X^1_0-\int_0^tX^2_s\dd s)\Psi(X^3_{s_1},..,X^{n+2}_{s_n}).
\end{eqnarray*}
Now suppose $(X^m){m\geq 1}$ is a sequence in $D([0,T],\R)^{n+2}$ (denoting $X^m=(X^{m,1},..,X^{m,n+2})$) that converges to $X\in D([0,T],\R)^{n+2}$ such that $X^i$ is a continuous path for each $i\leq n+2$. Note that this implies that for each $i\leq n$ $X^{m,i}$ converges uniformly to $X^i$. So in particular for fixed $i\leq n, t\in[0,T]$ $X^{m,i}_t$ converges to $X^i_t$ and, because of the uniform convergence, $\int_0^tX^{m,i}_s\dd s$ converges to $\int_0^t X^i_s\dd s$. Combining all of this with the knowledge that $\Psi$ is continuous, we obtain that $P_2(X^m)$ converges to $P_2(X)$. Since for each $f$, $Y(f)$ is continuous with probability $1$, we see that the $X\in D([0,T],\R)^{n+2}$ such that $X^i$ is a continuous path for each $i\leq n+2$ is a set of full measure under the measure on $X\in D([0,T],\R)^{n+2}$ induced by $\LL^*$ through $P_1$. Therefore the set of discontinuities of $P_2$ has measure $0$ under this measure. Hence by the Portmanteau theorem, we conclude that
\begin{equation*}
    M_tfI(Y^N) = P_2(P_1(Y^N))\rightarrow M_tfI(Y)
\end{equation*}
in distribution, which is what we wanted.
Note that the Portmanteau theorem is only valid in the context of metric spaces, this is the reason for the reduction to the (metric!) space $D([0,T],\R)^{n+2}$.


The proof for the second martingale is similar. We can estimate
\begin{eqnarray*}
    \E_N \left(N^{N,f}_t-N^f_t\right)^2&=&\E_N \left(\left((M_t^{N,f})^2 - \int_0^t \Gamma^{N,f}(s)\dd s \right) -\left((M_t^f)^2 - 2t\rho(1-\rho)\int (\grad f)^2 \dd \overline V\right)\right)^2\\
    &\leq& 2\E_N \left((M_t^{N,f})^2-(M_t^f)^2\right)^2 + 2\E_N \left(\int_0^t \Gamma^{N,f}(s)\dd s - 2t\rho(1-\rho)\int (\grad f)^2 \dd \overline V\right)^2
\end{eqnarray*}
The right term goes to $0$ by lemma~\ref{lemma:quadvarconv}. For the left term note that
\begin{eqnarray*}
    \E_N \left((M_t^{N,f})^2-(M_t^f)^2\right)^2 &=& \E_N (M_t^{N,f}-M_t^f)^2(M_t^{N,f}+M_t^f)^2 \leq \left(\E_N (M_t^{N,f}-M_t^f)^4\E_N(M_t^{N,f}+M_t^f)^4\right)^{1/2}\\
    &\leq& \left(8\E_N (M_t^{N,f}-M_t^f)^4\left(\E_N(M_t^{N,f})^4+\E_N(M_t^f)^4\right)\right)^{1/2}
\end{eqnarray*}
(where we used that $(a+b)^4\leq 8(a^4+b^4)$). Now we calculate
\begin{eqnarray}
    &&\E_N (M_t^{N,f}-M_t^f)^4\nonumber\\ 
     &=& \E_N \left(\int_0^t Y_s(\el^N f-\Delta_M f)\dd s\right)^4 \leq t^3 \int_0^t \E_N \left(Y_s(\el^N f-\Delta_M f)\right)^4 \dd s\label{eq:bound4mom1}\\
    &=& t^4 \frac{1}{N^2}\sum_{i,j,k,l=1}^N  (\el^N f-\Delta_M f)(p_i)..(\el^N f-\Delta_M f)(p_l)\E_N(\eta_0(p_i)-\rho)..(\eta_0(p_l)-\rho),
\end{eqnarray}
where we used that the expectation in the first line does not depend on $s$, so we can just set $s=0$. Now note that the expectation in the last line is only non-zero if every index is present either 2 or 4 times. This gives $O(N^2)$ non-zero terms. This means there is some constant $C>0$ such that
\begin{equation}\label{eq:bound4mom2}
    \E_N (M_t^{N,f}-M_t^f)^4 \leq t^4 C \sup_{i\leq N}|\el^N f-\Delta_M f)(p_i)|^4 = t^4 C E(N)^4,
\end{equation}
which goes to zero as $N$ goes to infinity.

Now we show that $\E_N(M_t^{N,f})^4$ is uniformly bounded, the term with $M_t^f$ can be treated analogously.
\begin{equation*}
    \E_N(M_t^{N,f})^4\leq 64 \left(\E_N Y_t(f)^4 + \E_N Y_0(f)^4 + \E_N \left(\int_0^t Y_s(\el^N f) \dd s\right)^4\right).
\end{equation*}
All these terms can be bounded as in~\eqref{eq:bound4mom1} to~\eqref{eq:bound4mom2}. One should further use that
\begin{equation*}
    \sup_{i\leq N} |\el^N f(p_i)|\leq \|\Delta_Mf\|_\infty + E(N) = O(1).
\end{equation*}
With the same kind of computations, one can show that $N^{N,f}_t\Psi(Y)$ under $\LL^N$ has uniformly bounded second moments, so is uniformly integrable. Than using continuity of the terms involved with respect to the uniform distance, we conclude in the same way as with $M^f_t$.
\end{proof}

Now we finally need to know that the martingale problem has unique solutions (given the initial conditions).
\begin{prop}
The martingale problem~\eqref{mgaleprobeq} together with the initial condition~\eqref{initdist} uniquely determine $\LL^*$ as a measure on $D([0,T],(C^\infty)')$.
\end{prop}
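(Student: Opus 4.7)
The plan is to reduce uniqueness on $D([0,T],(C^\infty)')$ to uniqueness of the finite-dimensional distributions of $Y$, and then to identify these distributions from the martingale problem by using time-dependent test functions. Since $C^\infty(M)$ is a nuclear Fr\'echet space, a probability measure on $D([0,T],(C^\infty)')$ is determined by the joint laws of $(Y_{t_1}(f_1),\ldots,Y_{t_k}(f_k))$ for arbitrary $t_i\in[0,T]$ and $f_i\in C^\infty$ (this is the separation-of-points argument underlying the Mitoma-type criterion already invoked in the overview). Thus it is enough to show that \eqref{mgaleprobeq} together with \eqref{initdist} pin down all such marginals.

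The main tool I would introduce is a time-dependent extension of the martingale problem. Let $(T_t)_{t\geq 0}$ be the semigroup on $C^\infty(M)$ with $s\mapsto T_{t-s}f$ solving $(\partial_s+\Delta_M)u=0$ (well-defined by compactness of $M$). For any family $s\mapsto \varphi_s\in C^\infty(M)$ that is $C^1$ in $s$, I claim that
\begin{equation*}
\widetilde M^\varphi_t := Y_t(\varphi_t) - Y_0(\varphi_0) - \int_0^t Y_s\bigl((\partial_s+\Delta_M)\varphi_s\bigr)\,\dd s
\end{equation*}
is a continuous $\LL^*$-martingale with deterministic quadratic variation $\int_0^t 2\rho(1-\rho)\int(\grad\varphi_s)^2\,\dd\overline V\,\dd s$. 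To prove this, partition $[0,t]$ as $0=r_0<\ldots<r_n=t$ and telescope
\begin{equation*}
Y_t(\varphi_t)-Y_0(\varphi_0)=\sum_k \bigl[Y_{r_{k+1}}(\varphi_{r_k})-Y_{r_k}(\varphi_{r_k})\bigr]+\sum_k\bigl[Y_{r_{k+1}}(\varphi_{r_{k+1}})-Y_{r_{k+1}}(\varphi_{r_k})\bigr].
\end{equation*}
For each $k$, apply Proposition~\ref{mgaleprob} to the frozen function $\varphi_{r_k}$: the first sum becomes a sum of martingale increments plus drifts $\int_{r_k}^{r_{k+1}} Y_s(\Delta_M\varphi_{r_k})\,\dd s$, which in the limit delivers a continuous martingale with the stated quadratic variation (via the $N^f$ part of the problem and polarization) and the drift $\int_0^t Y_s(\Delta_M\varphi_s)\,\dd s$. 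The second sum, by $C^1$-regularity of $s\mapsto\varphi_s$ and path continuity of $s\mapsto Y_s(g)$ from Lemma~\ref{contpaths}, converges to $\int_0^t Y_s(\partial_s\varphi_s)\,\dd s$.

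Specializing to $\varphi_s=T_{t-s}f$ gives $Y_t(f)=Y_0(T_tf)+\widetilde M^\varphi_t$, where $\widetilde M^\varphi$ is a continuous martingale with deterministic quadratic variation; L\'evy's characterization then realizes $\widetilde M^\varphi$ as a time-changed Brownian motion and makes $Y_t(f)\mid\F_0$ Gaussian with explicit variance. Applying the same construction to $\varphi_s=\sum_{k:\,t_k\geq s}\lambda_k T_{t_k-s}f_k$ for any times $0\leq t_1\leq\ldots\leq t_n\leq T$ and coefficients $\lambda_k\in\R$, $f_k\in C^\infty$, one obtains that $\sum_k \lambda_k Y_{t_k}(f_k)$ is Gaussian conditional on $\F_0$, with mean and covariance determined explicitly by $T_\cdot$ and the $f_k$; combined with the Gaussian initial condition \eqref{initdist} this fixes all joint characteristic functions of $\LL^*$ (and one recovers the stationary covariance \eqref{eq:covOU}). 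Uniqueness follows.

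The main obstacle is the rigorous justification of the time-dependent martingale identity, because $\LL^*$ is only known as a subsequential limit and no direct stochastic calculus for $s\mapsto Y_s(\varphi_s)$ is available a priori. The approximation must be carried out entirely on the level of conditional expectations under $\LL^*$, and two ingredients inherited from the construction are essential to push the limits through: the $\LL^*$-a.s.\ continuity of $s\mapsto Y_s(f)$ proved in Lemma~\ref{contpaths}, and the uniform $L^2$-bounds $\E Y_s(g)^2\leq \rho(1-\rho)\int g^2\,\dd\overline V$ inherited via weak convergence from \eqref{def:field}.
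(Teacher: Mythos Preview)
Your proposal is correct and follows essentially the same route as the paper, which simply defers to \cite[Chapter 11, \S 4]{kipnis1999scaling}: pass to time-dependent test functions, specialize to $\varphi_s=T_{t-s}f$ to kill the drift, and use that the resulting martingale has deterministic quadratic variation to identify the conditional laws as Gaussian. You have written out the telescoping/approximation step that the paper leaves entirely to the reference. One small remark: the choice $\varphi_s=\sum_{k:\,t_k\geq s}\lambda_k T_{t_k-s}f_k$ is only piecewise $C^1$ in $s$, so the time-dependent identity should be applied successively on each interval $[t_{k-1},t_k]$ rather than globally; this is routine and does not affect the argument.
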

\begin{proof}
The proof of this theorem follows exactly like the proof of \cite[Chapter 11 Theorem 0.2]{kipnis1999scaling}, which is given in paragraph~4 of the same chapter. The idea of the proof is that for fixed $f\in C^\infty$ one can use the martingales $M^f$ and $N^f$ from the martingale problem to calculate the transition probabilities for the corresponding process $Y=(Y_t,t\geq 0)$. Since this process is Markov, the transition probabilities combined with the initial condition uniquely determine it.
\end{proof}

We conclude that every convergent subsequence $\LL_{N_k}$ converges to the same limit $\LL^*$. This implies that $\LL_N$ converges to $\LL^*$. Moreover, since $\LL^*$ satisfies the martingale problem~\eqref{eq:OU-mgale-def}, the limiting field is a generalized Ornstein-Uhlenbeck process.\\
\\

To conclude this section, we directly calculate the covariance structure of the limiting field. This is indeed the covariance given in~\eqref{eq:covOU} that one would expect from a generalized Ornstein-Uhlenbeck process.
\begin{prop}
For all $f,g\in C^\infty, t,s\geq 0$
\begin{equation*}
    \E[Y_{t+s}(f)Y_s(g)]=\rho(1-\rho) \int (S_{t}f)g \dd \overline V.
\end{equation*}
\end{prop}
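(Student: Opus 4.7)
The plan is to derive an integral equation for the two-time covariance using Proposition~\ref{mgaleprob} and then to solve it by a Duhamel-type transport argument involving the heat semigroup $(S_t)_{t\geq 0}$. The starting observation is stationarity: each $\LL_N$ is time-invariant because the SEP is started from the invariant measure $\nu^N_\rho$, and this is inherited by the weak limit $\LL^*$. Combined with the initial covariance~\eqref{initdist}, stationarity yields
\begin{equation*}
    \E[Y_u(h)Y_u(k)] = \rho(1-\rho)\int hk\, \dd \overline V
\end{equation*}
for all $u\geq 0$ and all smooth $h,k$, which in particular matches the target formula at $t=0$.

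Next, I apply Proposition~\ref{mgaleprob}. The martingale $M^h$, conditioned on $\mathscr{F}_s$ and then multiplied by the $\mathscr{F}_s$-measurable factor $Y_s(g)$, produces after taking expectations the key identity
\begin{equation*}
    \E[Y_u(h) Y_s(g)] - \E[Y_s(h) Y_s(g)] = \int_s^u \E[Y_r(\Delta_M h) Y_s(g)]\, \dd r, \quad u\geq s,
\end{equation*}
with the Fubini exchange justified by the stationarity bound $\E|Y_r(\Delta_M h)Y_s(g)|\leq \rho(1-\rho)\|\Delta_M h\|_{L^2}\|g\|_{L^2}$ coming from the previous step.

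The heart of the argument is a transport trick: for fixed $T>0$ I set $G(t) := \E[Y_{s+t}(S_{T-t}f)Y_s(g)]$ on $[0,T]$. Splitting $G(t+\epsilon)-G(t)$ into an increment in the time slot and an increment in the test-function slot, the identity above handles the former, while the heat equation $\partial_t S_t f = \Delta_M S_t f$ (valid strongly in $C^\infty(M)$) handles the latter; the two contributions converge respectively to $\E[Y_{s+t}(\Delta_M S_{T-t}f)Y_s(g)]$ and $-\E[Y_{s+t}(\Delta_M S_{T-t}f)Y_s(g)]$ and therefore cancel. Thus $G$ is constant on $[0,T]$, and comparing $G(T) = \E[Y_{s+T}(f)Y_s(g)]$ with $G(0) = \E[Y_s(S_T f) Y_s(g)] = \rho(1-\rho)\int (S_T f) g\, \dd \overline V$ gives the claim with $T$ in the role of $t$. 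The main technical point I expect is making this derivative computation rigorous: one needs continuous differentiability of $t\mapsto S_t f$ into $C^\infty(M)$ (standard on a compact manifold via spectral theory of $\Delta_M$) together with the passage of $\epsilon\to 0$ limits inside the expectations, again justified by the stationarity $L^2$ bound applied to the smooth one-parameter family $(S_t f)$.
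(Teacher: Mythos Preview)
Your argument is correct but follows a genuinely different route from the paper. The paper never works at the level of the limit process: it computes the two-time covariance directly at the discrete scale using the self-duality of SEP with a single random walker, obtaining
\[
\E\big[(\eta_t(p_i)-\rho)(\eta_0(p_j)-\rho)\big]=\rho(1-\rho)\,\p_{p_i}(X_t=p_j),
\]
which immediately gives $\E[Y^N_t(f)Y^N_0(g)]=\frac{\rho(1-\rho)}{N}\sum_i f(p_i)\,S^N_t g(p_i)$. It then passes to the limit via the uniform convergence $S^N_t\to S_t$ established in the companion paper, together with stationarity. By contrast, you work entirely with the limiting law: you combine the martingale problem of Proposition~\ref{mgaleprob} with the stationary one-time covariance to obtain an integral equation for $u\mapsto \E[Y_u(h)Y_s(g)]$, and then solve it by the transport function $G(t)=\E[Y_{s+t}(S_{T-t}f)Y_s(g)]$, whose derivative vanishes because the time increment (handled by the martingale identity) and the test-function increment (handled by $\partial_t S_t f=\Delta_M S_t f$) cancel. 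The paper's approach is shorter and exploits the concrete particle structure (duality and the discrete semigroup convergence), while yours is intrinsic to the limiting object: it uses only stationarity, the initial covariance~\eqref{initdist}, and the martingale problem, and would identify the covariance of \emph{any} stationary solution of~\eqref{eq:OU-mgale-def} with that initial law, independently of how it was constructed. The only places to be a little careful in your write-up are the passage of stationarity to the weak limit (argue via convergence of finite-dimensional distributions at continuity times, using Lemma~\ref{contpaths}) and the limit $\epsilon\to 0$ in the test-function slot, where you should use that $S_{T-t}f$ is smooth and that $\epsilon^{-1}(S_{T-t-\epsilon}f-S_{T-t}f)\to -\Delta_M S_{T-t}f$ in $L^2(\overline V)$, together with the stationary Cauchy--Schwarz bound you already invoke.
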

\begin{proof}
We start to calculate the following covariance. To this we need to use duality of the SEP with a random walk, as is shown in~\cite[Thm 4.74]{liggett2010continuous}.
\begin{eqnarray*}
    &&\E \left[(\eta_t(p_i)-\rho)(\eta_0(p_j)-\rho)\right]
    = \int \E_{\eta'} \left[\eta_t(p_i)-\rho\right] (\eta'_0(p_j)-\rho) \nu_\rho(\dd\eta') \\
    &=& \int \E_{p_i} \left[(\eta'(X_t)-\rho)\right] (\eta'_0(p_j)-\rho) \nu_\rho(\dd\eta')
    =  \E_{p_i}  \left[\int(\eta'(X_t)-\rho)(\eta'(p_j)-\rho)\nu_\rho(\dd\eta')\right] \\
    &=& \E_{p_i} \1_{X_t=p_j}\rho(1-\rho) = \rho(1-\rho)\p_{p_i}(X_t=p_j),
\end{eqnarray*}
where under $\E_{p_i}$, $X$ is the random walk starting from $p_i$.
Now we calculate the covariances for $\LL^N$.
\begin{eqnarray*}
    \E[Y^N_t(f)Y^N_0(g)] &=& \frac{1}{N} \sum_{i,j=1}^N f(p_i)g(p_j) \E \left[(\eta_t(p_i)-\rho)(\eta_0(p_j)-\rho)\right]\\
    &=&\frac{\rho(1-\rho)}{N} \sum_{i=1}^N f(p_i) \sum_{j=1}^N  \p_{p_i}(X_t=p_j) g(p_j)\\
    &=&\frac{\rho(1-\rho)}{N} \sum_{i=1}^N f(p_i) S^N_tg(p_i),
\end{eqnarray*}
where $(S^N_t,t\geq 0)$ is the semigroup corresponding to the random walk $X^N$ on the grid points. By remark 3.4 from~\cite{vanGinkel/Redig:2018}, for each $f\in C^\infty$,
\begin{equation*}
    \lim_{N\rightarrow\infty}\sup_{1\leq i\leq N} \left| S_t^Nf|_{G_N}(p_i)-S_tf(p_i)\right| = 0,
\end{equation*}
where $(S_t,t\geq 0)$ is the semigroup of Brownian motion. Using this and stationarity of $\eta$, we obtain
\begin{eqnarray*}
    \E[Y_{t+s}(f)Y_s(g)] &=& \lim_{N\rightarrow\infty} \E[Y^N_{t+s}(f)Y^N_s(g)] = \lim_{N\rightarrow\infty} \E[Y^N_t(f)Y^N_0(g)] \\
    &=& \lim_{N\rightarrow\infty} \frac{\rho(1-\rho)}{N} \sum_{i=1}^N f(p_i) S^N_tg(p_i) = \rho(1-\rho) \int fS_tg \dd \overline V.
\end{eqnarray*}
\end{proof} 

\section*{Acknowledgement}
The authors thank Richard Kraaij for helpful discussions and for pointing out the reference~\citep{jakubowski1986skorokhod}.
The support of the grant 613.009.112 of the Netherlands Organisation for Scientific Research (NWO) is gratefully acknowledged.

\phantomsection
\bibliographystyle{abbrvnat}
\bibliography{refs}
\end{document}